\documentclass[11pt,thmsa]{article}
\usepackage{amsmath, latexsym, amsfonts, amssymb, amsthm, amscd, url}
\usepackage{color}
\usepackage{amsmath,amssymb,amsthm, color}
\usepackage[english]{babel} 
\usepackage{babelbib}
\usepackage{version,tabularx,multicol}
\usepackage{graphicx,float, url}
\usepackage{amsfonts}
\usepackage{pdfsync}
\usepackage{color}
\usepackage{graphicx}
\usepackage{epsfig}
\usepackage{subfigure}
\usepackage{color}
\usepackage{float}
\usepackage{color}
\usepackage{tikz}
\usepackage{flafter}

\textheight 230mm \topmargin 0cm \textwidth 175mm \headheight 0pt
\oddsidemargin -0.5cm\headsep 0in


\newtheorem{theorem}{Theorem}

\newtheorem{proposition}{Proposition}

\newtheorem{remark}{Remark}

\newtheorem{definition}{Definition}
\newtheorem{example}{Example}
\newcommand{\p}{\Bbb{P}}

\newcommand{\N}{\mbox{\rm I\hspace{-0.02in}N}}
\newcommand{\R}{\mathbb{R}}

\newcommand{\E}{\ensuremath{\mathbb{E}}}

\usepackage[T1]{fontenc}
\usepackage[utf8]{inputenc}
\usepackage{authblk}

\title{Seed bank Cannings graphs: How dormancy smoothes random genetic drift}

\author[1]{Adri\'an Gonz\'alez Casanova}
\author[2]{Lizbeth Pe\~naloza}
\author[3]{Arno Siri-J\'egousse}

\affil[1]{\footnotesize Unidad Cuernavaca del Instituto de Matem\'aticas de la  Universidad Nacional Aut\'onoma de M\'exico,
Av. Universidad s/n Periferica, 62210 Cuernavaca, Morelos, M\'exico. Email: adrian.gonzalez@im.unam.mx} 
\affil[2]{\footnotesize  Instituto de Investigaci\'on de Matem\'aticas y Actuar\'ia, Universidad del Mar, campus Huatulco. Carretera Federal No 200, Km 250, Santa Mar\'ia Huatulco 70989, Oaxaca, M\'exico. Email: lizbeth@huatulco.umar.mx}
\affil[3]{\footnotesize Instituto de Investigaciones en Matem\'aticas Aplicadas y Sistemas, Universidad Nacional Aut\'onoma de M\'exico, Circuito Escolar 3000, C.U., 04510 Coyoac\'an, CDMX, M\'exico. Email: arno@sigma.iimas.unam.mx}


\begin{document}

\maketitle

\begin{abstract}
In this article, we introduce a random (directed) graph model for the simultaneous forwards and backwards description of a rather broad class of Cannings models with a seed bank mechanism. This provides a simple tool to establish a sampling duality in the finite population size, and obtain a path-wise embedding of the forward frequency process and the backward ancestral process. Further, it allows the derivation of limit theorems that generalize celebrated results by M\"ohle to models with seed banks, and  where it can be seen how the effect of seed banks affects the genealogies. 
The explicit graphical construction is a new tool to understand the subtle interplay of seed banks, reproduction and genetic drift in population genetics.

\end{abstract}
\small{{\bf Keywords}: Seed bank, Moment duality, Weak convergence, Mixing time.}\\
\small{{\bf2020 Mathematics Subject Classification}: 92D10, 60F05, 60G10, 60J05, 60J90, 92D25.}

\section{Introduction}
Cannings models and their modifications,  along with their multiple merger genealogies are a major topic in mathematical population genetics \cite{Sagitov1999, Sagitov_2001, Schweinsberg_2003, BHS18, Fre20, GMS21, SJW}.  Also, in the last decade, the study of dormancy {(also called seed bank effect)} received significant attention \cite{KKL, BGKS, BGKW, Blath_2020}. 
One of the unifying themes in both modeling areas is that they arise from extensions of the Wright-Fisher model, and that classical evolutionary forces such as genetic drift and selection are affected in important ways. While the theory of Cannings models is now robust, the study of models with dormancy is still work in progress. The main goal of this paper is to stabilize a framework in which seed banks can be combined with Cannings models, and to generalize the known limiting results for models without dormancy to Cannings models with seed bank.

An important tool in population genetics is the moment duality for Markov processes.
This technique establishes a mathematical relation between forward and backward in time processes.  The celebrated duality between the Wright-Fisher diffusion and the Kingman coalescent was gradually generalized to a wide class of neutral population genetics models, including some finite size discrete populations such as Cannings-type models \cite{Gonz_lez_Casanova_2018}.
In this situation, the duality leads to asymptotic results for both forward frequency and genealogical processes. In the context of dormancy, duality was established for the seed bank diffusion, which arises as a limit in models with geometric seed bank \cite{BGKW}. However, for discrete seed bank models, the duality relation is the first open gap that this article aims to close.
  
%

There are two main models for dormancy phenomena.
\begin{itemize}
\item{Kaj et al.}  The model defined in \cite{KKL} is based on the Wright-Fisher model with additional multi-generational jumps of (bounded) size, the system has been extended to geometric jump sizes of bounded expected range in \cite{Koopmann_2017} (which also provide some insight into the forward in time frequency diffusion), to the general finite expectation case in \cite{BGKS}, and even to unbounded (heavy-tailed) jump sizes in \cite{BEGK}.
\item{Blath et al.} A second modeling frame is given by an external  seed bank in terms of a ``second island''   (in the spirit of Wright's island model), effectively leading to geometric jump sizes on the evolutionary scale. Here, forward and backward limits have been constructed, giving rise to the seed bank diffusion and the seed bank coalescent \cite{BGKW} (see more analysis and generalization in \cite{GPS, Blath_2020} and an interesting connection with metapopulations in \cite{L2013}). 
\end{itemize}


Both modeling frames (generational jumps and second island) have their advantages and disadvantages. For the Wright-Fisher model with multi-generational jumps, one typically loses the Markov property. For the island version, one retains the Markov property but then needs to investigate two-dimensional frequency processes, which in the limit are harder to analyze than one-dimensional diffusions, since e.g. the Feller theory is missing (this can in part be replaced by recent theory for polynomial diffusions \cite{BBGW19}). Interestingly, it turns out that for the limiting frequency processes, both approaches can be two sides of the same medal.

In none of the above approaches, more general reproductive mechanisms, such as based on Cannings models, have been analyzed. This paper's second aim is to close this gap. 
We present an extended framework for the simultaneous construction of seed bank models with general multi-generational jump distributions and Cannings-type reproductive laws satisfying a paintbox construction. We are also able to obtain forward and backward convergence results (extending \cite{KKL}, \cite{Koopmann_2017} and \cite{BGKS}) and to provide an explicit sampling duality, which is valid already in the finite individual models.

More precisely, we show that if a sequence of Cannings models (with no seed bank effect) is in the universality class of the Kingman coalescent, meaning that its ancestral process converges in the evolutionary scale to the Kingman coalescent, then the ancestry of the same sequence with a seed bank effect will converge to the Kingman coalescent delayed by a constant $\beta^2$, where $\beta<\infty$ is the expected number of generations that separates an individual from its ancestor. This extends the results of \cite{KKL} and \cite{BGKS}. Convergence of the frequency process to the solution of the Wight-Fisher diffusion with the same delay is also proved. We go further and study how sequences of seed bank models with divergent expectations can make sequences of Cannings models that originally were not in the Kingman class, converge to the Kingman coalescent. This is achieved using the mixing time of some auxiliary Markov chains introduced in \cite{KKL}.
If instead of considering Cannings processes in the Kingman class we consider that their genealogy converges to a $\Xi$-coalescent, we show that their seed bank modification converges to a $\Xi^\beta$-coalescent. {Heuristically, the transformation $\Xi\to\Xi^\beta$ consists} in dividing by $\beta$ all the non-dust boxes in a $\Xi$ paintbox event to obtain a $\Xi^\beta$ paintbox event. Similar asymptotics are shown for the forward process. All those results are extended for models in the presence of mutations.

 
Note that the interplay of general reproduction and seed banks with other evolutionary forces can be subtle, and we provide a framework for its analysis (also regarding the real-time embedding of coalescent-based estimates, see e.g. \cite{Blath_2020}).

The paper is organized as follows. In section \ref{S2} we construct a random graph that allows us to embed the ancestry and the frequency processes of both Cannings and dormancy models simultaneously and study the duality relation of the processes forward and backward in time. Furthermore, we analyze the scaling limits of the ancestral process in presence of skewed reproduction mechanisms and dormancy.  We give conditions for convergence to the Kingman coalescent and study scenarios beyond this universality class, where we can describe how seed bank phenomena reduce the typical size coalescence events when combining seed banks with Cannings models that would, in absence of the seed bank component, converge to a $\Lambda$- or a $\Xi$- coalescent. Section \ref{ffp} uses the moment duality to formally prove convergence of the frequency process to a Wright-Fisher diffusion. This intuitively clear result was missing in the literature, probably since the lack of Markov property for the frequency process makes usual techniques fail. In section \ref{S4} we study a variant of the seed bank random graph  where mutations are added and we extend the results obtained in sections \ref{S2} and \ref{ffp}.

\section{A random graph version of the model of Kaj, Krone and Lascoux}\label{S2}

Consider a discrete-time haploid population of constant size $N\geq1$ at each generation.
The vertex set $V^N=\mathbb{Z}\times[N]$ represents the whole population. 
For each individual $v\in V^N$, denote by $g(v)$ its generation and by $\ell(v)$ its label so that $v=(g(v),\ell(v))$.
We denote the $g$-th generation of the population by $V^N_{g}:=\{v\in V^N:g(v)=g \}$.
Set a probability measure $\mathcal{W}^N$ on the exchangeable probability measures on $[N]$.
Let $\{\bar W^N_g\}_{g\in\mathbb{Z}}$ be a sequence of independent $\mathcal{W}^N$-distributed random variables 
with $\bar W^N_g=\{W_{v}^{N}\}_{v\in V^N_g}$.
Each variable $W_{v}^{N}$ gives the reproductive weight of the individual $v$ in the population graph.
This multinomial setting can be extended to some more general Cannings models (as in \cite{Sagitov_2001}) or non-exchangeable reproductive success (as in \cite{SJW}).
Also, consider a sequence $\{m_N\}_{N\geq1}$ of integers and
set a probability measure $\mu^N$ on $[m_N]$.  
Let $\{J^N_v\}_{v\in V^N}$ be a collection of independent $\mu^N$-distributed  random variables.
 The variable $J^N_v$ says  how many generations ago an individual $v$'s mother is living.
Finally, set a collection of random variables in $[N]$, $\{U^N_v\}_{v\in V^N}$ such that $U^N_v$ is the label of the mother of $v$.  Its conditional distribution is 
$$
\mathbb{P}(U^N_{v}=k|J^N_{v}=j, \{\bar W^N_g\}_{g\in\mathbb Z})=W_{(g(v)-j,k)}^{N}.
$$ 
\begin{definition}\label{SeedBankgraph} (The seed bank random di-graph)
Consider the random set of directed edges $$E^N=\{(v,(g(v)-J^N_v,U^N_v)), \,\text{ for all }v\in V^N\}.$$
The \textit{seed bank random di-graph} with parameters $N$, $\mathcal{W}^N$ and $\mu^N$ is given by $G^N:=(V^N,E^N)$.
\end{definition} 
Two classical examples are
\begin{itemize}
\item the Kaj, Krone and Lascoux (KKL) seed bank graph \cite{KKL}, in this case  $\mu^N$ has finite support $[m]$, i.e. $m_N=m$, and $\mathcal W^N=\delta_{(1/N,...,1/N)}$.
\item the Cannings model with parameter $\mathcal{W}^N$ \cite{Can1974,Can1975,Sagitov_2001}, in this case $\mu^N=\delta_1$. 
\end{itemize}

For every $u,v\in V^N$ we denote by $\delta(u,v)$ the distance of $u$ and $v$ in the graph $G^N$, i.e. the number of vertices in a path from $u$ to $v$ or from $v$ to $u$. Now let us define the ancestral process associated with this graph.

\begin{definition}[The ancestral process]\label{AncProc}
Fix a generation $g_0$ and $S_{g_0}$  consisting in a sample of individuals living between generation $g_0$ and  $g_0-m_N+1$, i.e. $S_{g_0}\subset\cup_{i=1}^{m_N}  V^N_{g_0+1-i}$.
  For every $g\geq 0$, let $\mathcal A^N_g$ be the set composed by the most recent ancestors of the individuals of $S_{g_0}$ that live at a generation $g_0-g'$ for some $g'\geq g$, that is 
  
$$\mathcal A^N_g=\{v\in \cup_{g'=g}^\infty V^N_{g_0-g'}: \exists u\in S_{g_0}\text{ such that }\delta(u,v)\leq \delta(u,v')\, \text{for all 
} v'\in  \cup_{g'=g}^\infty  V^N_{g_0-g'} \}.$$

Define, for all $ i\in [m_N]$,  
$$A^{N, i}_g=| \mathcal A^N_g\cap V^N_{g_0-g+1-i}|$$
and  $\bar A^{N}_g= (A^{N, 1}_g,\dots, A^{N, m_N}_g)$. 
We call $\{\bar A^{N}_g\}_{g\geq0}$ the ancestral process.
In the sequel, we consider the initial configuration $S_{g_0}(\bar n)$, for $\bar n=(n_1,\dots,n_{m_N})$,  such that $n_i\ge0$ individuals are uniformly sampled (with repetition) from generation $g_0+1-i$. We denote the law of the ancestral process of this sample by $\p_{\bar n}$.
See Figure \ref{FigAncestral} for an illustration.
\end{definition}

\begin{figure}
\centering
\begin{tikzpicture}
\draw[->,thick] (8.9,8) -- (5.2,7.1);
\draw[->,thick] (8.9,6) -- (5.2,6.9);
\draw[->,thick] (8.9,3) -- (5.2,4);
\draw[->,thick] (8.9,4) -- (7.2,4.9);
\draw[->,thick] (7,5) -- (5.2,4.1);
\draw[->,thick] (7,3) -- (5.2,3);
\draw[->,thick] (5,7) -- (3.2,7);
\draw[->,thick] (5,4) -- (3.2,6.9);
\draw[->,thick] (5,3) -- (1.2,5.9);
\draw[->,thick] (3,7) -- (1.2,6.1);

\draw[->,thick] (1,6) -- (-0.8,5.1);
\draw (-1,1) circle(0.2);
\draw (-1,2) circle(0.2);
\draw (-1,3) circle(0.2);
\draw (-1,4) circle(0.2); 
\draw[fill=lightgray] (-1,5) circle(0.2);
\draw (-1,6) circle(0.2); 
\draw (-1,7) circle(0.2);
\draw (-1,8) circle(0.2); 

\draw (1,1) circle(0.2);
\draw (1,2) circle(0.2);
\draw (1,3) circle(0.2);
\draw (1,4) circle(0.2); 
\draw (1,5) circle(0.2);
\draw[fill=lightgray] (1,6) circle(0.2); 
\draw (1,7) circle(0.2);
\draw (1,8) circle(0.2); 

\draw (3,1) circle(0.2);
\draw (3,2) circle(0.2);
\draw (3,3) circle(0.2);
\draw (3,4) circle(0.2); 
\draw (3,5) circle(0.2);
\draw (3,6) circle(0.2); 
\draw[fill=lightgray] (3,7) circle(0.2);
\draw (3,8) circle(0.2); 

\draw (5,1) circle(0.2);
\draw (5,2) circle(0.2);
\draw[fill=lightgray] (5,3) circle(0.2);
\draw[fill=lightgray] (5,4) circle(0.2); 
\draw (5,5) circle(0.2);
\draw (5,6) circle(0.2);
\draw[fill=lightgray] (5,7) circle(0.2);
\draw (5,8) circle(0.2); 

\draw (7,1) circle(0.2);
\draw (7,2) circle(0.2);
\draw[fill=gray] (7,3) circle(0.2); \node at (6.7,3.3) {$v_5$};
\draw (7,4) circle(0.2); 
\draw[fill=lightgray] (7,5) circle(0.2);
\draw (7,6) circle(0.2);
\draw (7,7) circle(0.2);
\draw (7,8) circle(0.2); 

\draw (9,1) circle(0.2); 
\draw (9,2) circle(0.2);
\draw[fill=gray] (9,3) circle(0.2); \node at (9.5,3) {$v_1$};
\draw[fill=gray] (9,4) circle(0.2); \node at (9.5,4) {$v_2$};
\draw (9,5) circle(0.2);
\draw[fill=gray] (9,6) circle(0.2);\node at (9.5,6) {$v_3$};
\draw (9,7) circle(0.2);
\draw[fill=gray] (9,8) circle(0.2); \node at (9.5,8) {$v_4$};
\node at (9,0.5) {0};
\node at (7,0.5) {-1};
\node at (5,0.5) {-2};
\node at (3,0.5) {-3};
\node at (1,0.5) {-4};
\node at (-1,0.5) {-5};

\end{tikzpicture}
\caption{In this case $N=8$ and $m_N=2$. The gray circles represent the members of $S_0=\{v_1,v_2,v_3,v_4, v_5\}$ where, for example, $v_2=(0,4)$ and $v_5=(-1,3)$. The light gray circles represent the ancestors of the sample. $\bar{A}^8_0=(4,1)$, $\bar{A}^8_1=(2,2)$, $\bar{A}^8_2=(3,0)$, $\bar{A}^8_3=(1,1)$,  $\bar{A}^8_4=(1,0)$, $\bar{A}^8_5=(1,0)$.} 
\label{FigAncestral}
\end{figure}
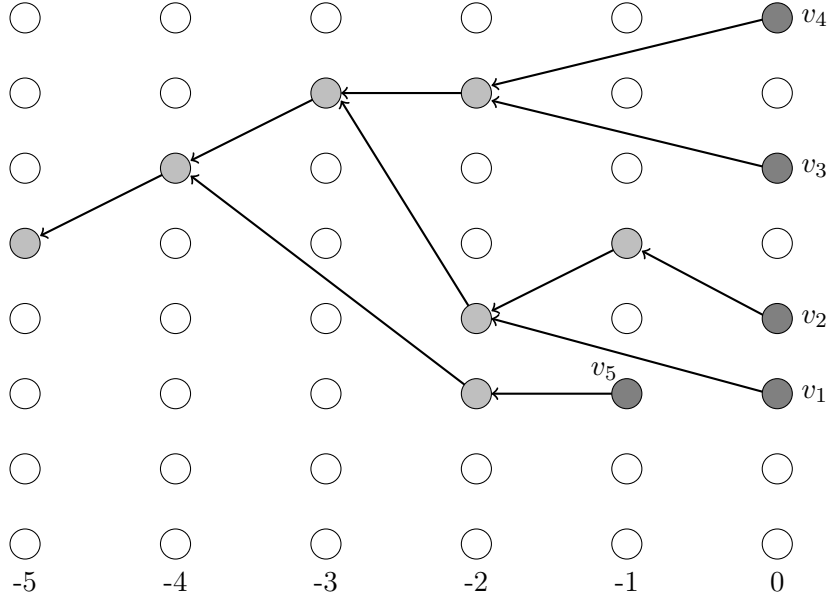
For simplicity, we suppose that $\sup\{i\ge1:n_i>0\}$ does not depend on $N$.
This model was introduced, for reproductions as in the Wright-Fisher model, by Kaj et al. \cite{KKL} directly, in the sense that they construct a random graph only implicitly. Our construction permits to provide a transparent relation between the ancestral process and the forward frequency process defined in section \ref{ffp}. 
Observe that $\{\bar A^{N}_g\}_{g\geq0}$ is a Markov chain. 
We start our results by formalizing the remark on p. 290 in \cite{KKL}.  This illustrative result is established when the Cannings model is in the domain of attraction of the Kingman coalescent, although it can be easily generalized to any type of reproduction law.
{Here we use the classical notations $c_N$ (resp. $d_N$) that denote the probability that two (resp. three) given individuals choose the same parent in a Cannings model. Those notations will be helpful all along the paper.
Recall, e.g. from \cite{Sagitov_2001}, that the genealogies of a Cannings model fall into the domain of attraction of the Kingman coalescent when $c_N\to0$ and $d_N=o(c_N)$ while multiple merger coalescents arise when $d_N$ and $c_N$ are of the same order. }

\begin{proposition}[Reformulation of Theorem 1 in \cite{KKL}]\label{transitionsA}
  Suppose that $c_N=N\E[( W^{N}_{v})^2]\to0$ and $d_N=N\E[( W^{N}_{v})^3]=o(c_N)$. Let $M(n)$ be a multinomial random variable with parameters $n$ and $\{\mu^N(i)\}_{i=1}^{m_N}$.  
  Also, for any $\bar n=(n_1,\dots, n_{m_N})\in [N]^{m_N},$ let $Z(\bar n) =(n_2,\dots, n_{m_N},0) +M(n_1)$.
Then, the transitions of $\{\bar A^{N}_g\}_{g\geq0}$ can be written in terms of $M$ and $Z$ as follows. 
\begin{itemize}
\item $\p_{\bar n}(\bar A^{N}_1=Z(\bar n))=1-\sum_{i=1}^\infty c_N[\binom{n_1}{2}\mu^N(i)^2+\mu^N(i)n_1n_{i+1}]+o(c_N)$ 
\item $\p_{\bar n}(\bar A^{N}_1=Z(\bar n)- e_i)=c_N[\binom{n_1}{2}\mu^N(i)^2+\mu^N(i)n_1n_{i+1}]+o(c_N)$
\end{itemize}
where $e_i$ is the vector with the $i$-th coordinate equal to $1$ and the others equal to $0$, for all $i\geq 1$.
\end{proposition}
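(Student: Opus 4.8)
The plan is to directly compute the one-step transition probabilities of the Markov chain $\{\bar A^N_g\}_{g\ge0}$ starting from a configuration $\bar n=(n_1,\dots,n_{m_N})$, and to show that the dominant contributions (up to $o(c_N)$) are exactly the two events described. First I would describe the deterministic ``shift'' mechanism: the ancestors currently sitting at generations $g_0-g+1-2,\dots,g_0-g+1-m_N$ (i.e.\ everyone except those counted in the first coordinate $n_1$) simply age by one generation, contributing the vector $(n_2,\dots,n_{m_N},0)$ to $\bar A^N_1$ with probability one. The $n_1$ ancestors living exactly at generation $g_0-g+1-1$ each independently pick, via their own $J^N_v$, how many generations back their mother lives, so the labels they land on are spread among generations $1,\dots,m_N$ back according to independent $\mu^N$ draws; this is precisely the multinomial vector $M(n_1)$. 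Superposing the two gives the ``no-coalescence'' target $Z(\bar n)$, and the only way to deviate from $Z(\bar n)$ is for two of the $n_1$ jumping lineages (or one jumping lineage and one of the $n_{i+1}$ lineages already sitting at the relevant generation, which after the shift both land at generation-back $i$) to choose the same parent label.

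Next I would condition on the jump variables $\{J^N_v\}$ for the $n_1$ individuals and on the reproductive weights $\bar W^N$, and count collision events among the parent labels. Two jumping lineages with $J=i,J'=i$ collide with conditional probability $\sum_k W^2_{(\cdot,k)}$, which has expectation $c_N/N\cdot N = c_N$ after using the definition $c_N=N\E[(W^N_v)^2]$ and exchangeability; the number of such pairs is, to leading order, $\binom{n_1}{2}$ weighted by the probability $\mu^N(i)^2$ that both jump $i$ generations back (plus lower-order corrections from the multinomial that are absorbed in $o(c_N)$). Similarly, a jumping lineage landing $i$ generations back collides with one of the $n_{i+1}$ already-present lineages at that generation with the same per-pair probability $c_N$, and there are $\mu^N(i) n_1 n_{i+1}$ such pairs in expectation. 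Summing over $i$ and over the two collision types, and using $d_N=o(c_N)$ to discard triple (and higher) collisions as well as the event that a collision occurs simultaneously with the merging being ``absorbed'' differently, yields the stated probability $c_N[\binom{n_1}{2}\mu^N(i)^2+\mu^N(i)n_1n_{i+1}]+o(c_N)$ for landing at $Z(\bar n)-e_i$, and the complement gives the first bullet.

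The main obstacle, and the step requiring the most care, is bookkeeping the error terms: one must check that (a) the probability of two or more coalescence events in a single step, or of a triple merger, is $O(d_N)+O(c_N^2)=o(c_N)$ (here the hypothesis $d_N=o(c_N)$ and $c_N\to0$ are exactly what is needed), (b) the difference between using the exact multinomial law of the jump destinations and the ``independent $\mu^N$'' approximation contributes only $o(c_N)$, and (c) the subtle point that after a coalescence the resulting configuration could in principle fail to be exactly $Z(\bar n)-e_i$ if the coalescing pair were at different generations-back — but by construction a collision only removes one unit from the coordinate corresponding to the common generation, so it is genuinely $Z(\bar n)-e_i$. Once these estimates are in place, the two bullet points follow, and since the listed events exhaust all possibilities up to $o(c_N)$ (by part (a)), the transition description is complete. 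I would also note that exchangeability of $\mathcal W^N$ is what lets us replace conditional collision probabilities by their $c_N$-expectations uniformly, exactly as in the classical Cannings-to-Kingman computation of \cite{Sagitov_2001}.
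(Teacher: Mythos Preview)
Your approach is essentially the same as the paper's: decompose the one-step transition into the deterministic shift $(n_2,\dots,n_{m_N},0)$, the multinomial redistribution $M(n_1)$ of the $n_1$ lineages at the first coordinate, and then a collision analysis for pairs landing at the same generation, with $d_N=o(c_N)$ and $c_N\to0$ controlling the higher-order events. The paper proceeds by writing out the transition explicitly for the minimal configuration $2e_1+e_i$ (its display~\eqref{coupling2}) and then remarks that the general case ``follows easily after these observations''; you simply carry out that general case directly, which is fine. One small remark: your item~(b) is a non-issue, since the multinomial law of the jump destinations \emph{is} exactly the law of $n_1$ independent $\mu^N$ draws --- there is no approximation to control there.
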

\begin{proof}
We need to make two observations. First note that all the randomness in the transitions of the chain $\{\bar A^{N}_g\}_{g\geq0}$ lies in what happens to the first coordinate.  If for some $g\geq0$, $\bar A^{N}_g=(0,n_2,\dots, n_{m_N})$ it is easy to see that  $\bar A^{N}_{g+1}=(n_2,\dots, n_{m_N},0)$ almost surely. On the other hand, if $n_1>0$, the individuals that are in $\mathcal A^N_g\cap V^N_{g_0-g}$ cannot belong to $\mathcal A^N_{g+1}$. Then, each of these individuals, if denoted by $v$, must be replaced by an individual which lives $J^N_v$ generations in the past, that is 
$$\p_{e_1}(\bar A^{N}_{1}=e_i)=\mu^N(i).$$
Further, if $n_1>1$, one needs to find $n_1$ new ancestors, but some of them could be the same due to some coalescence.
The complete picture is as follows.
 For $i\geq2$ and $j,k\geq0$, and by denoting $e_0$ for the null vector,

\begin{equation}\label{coupling2}
\p_{2e_1+e_i}(\bar A^{N}_{1}=e_{i-1}+e_j+e_k) =\left\{ \begin{array}{ll}
2\mu^N(j)\mu^N(k) & \textrm{ if } i-1\neq j\neq k\\
(\mu^N(j))^2(1-c_N) & \textrm{ if } i-1\neq j, j=k\\
2\mu^N(i-1)\mu^N(k)(1-c_N) & \textrm{ if } i-1=j, j\neq k\\
(2\mu^N(i-1)\mu^N(j)+(\mu^N(j))^2)c_N & \textrm{ if } i-1\neq j,  k=0\\
(\mu^N(i-1))^2 (c_N-d_N) & \textrm{ if } i-1=j,  k=0\\
(\mu^N(i-1))^2 d_N& \textrm{ if } j=k=0
\end{array}
\right..
\end{equation}
The proof follows easily after these observations.    
\end{proof}

We now construct a less natural backward process which will be very useful when establishing its moment duality with the forward process in section \ref{ffp}. {We start by defining it in a graphical and intuitive way. More formal definitions will follow all along the section.}

{\begin{definition}[The window process]\label{WindProc}
Fix a generation $g_0$, and $S_{g_0} \subset\cup_{i=1}^{m_N}  V^N_{g_0+1-i}$.
In the genealogical tree of the sample $S_{g_0}$, define the variable $B_g^{N,1}$ as the number of edges {arriving to} generation $g_0-g$ (plus the number of individuals of $S_{g_0}$ living at this generation). For any $i\in\{2,m_N\}$, let $B_g^{N,i}$ be the number of edges crossing generation $g_0-g$ and {arriving to} generation $g_0-g-i+1$ (plus the number of individuals of $S_{g_0}$ living at this generation).
 Then, let
$
\bar B^{N}_g:= (B_g^{N,1},\dots,B_g^{N,m_N}).
$
We call $\{\bar B^{N}_g\}_{g\geq0}$ the window process.
As for the ancestral process, we denote by $\p_{\bar n}$ the law of the window process generated from the initial sample $S_{g_0}(\bar n)$. 
\end{definition}
In Figure \ref{FigAncestral}, the values of the window process are $\bar{B}^8_0=(4,1)$, $\bar{B}^8_1=(2,3)$, $\bar{B}^8_2=(5,0)$, $\bar{B}^8_3=(2,1)$,  $\bar{B}^8_4=(2,0)$, $\bar{B}^8_5=(1,0)$.}
The window process and the ancestral process only differ in the time where we acknowledge a coalescence event. In the window process coalescence events only occur in the first coordinate, while in the ancestral process coalescence events may take place at every entry (see Figure \ref{FigAncestral}).


The following equivalent (in law) definition of the window process allows us to compare it with the ancestral process.
 Let $C^N(n)$ be the number of ancestors after one generation of a sample of $n$ individuals in a Cannings model with weights distributed as $\mathcal{W}^N$. As in Proposition \ref{transitionsA}, let  $M(n)$ be a multinomial random variable with parameters $n$ and $\{\mu^N(i)\}_{i=1}^{m_N}$.  Given $\bar B^N_{g-1}=\bar n=(n_1,\dots, n_{m_N})\in [N]^{m_N}$, 
$$
\bar B^N_g=(n_2,\dots, n_{m_N},0)+ M(C^N(n_1)).
$$
 in distribution. It is left to the reader to show that indeed both definitions are equivalent.

The process  $\{\bar B_g^N\}_{g\geq0}$  can be expressed in terms of a particle system. 
Fix $N,\mu^N$ and $\mathcal{W}^N$. 
Let $Y^N_g=(R^N_g,L^N_g)$ define a Markov chain with state space $\N\times [N]$ and transition probabilities, conditional on the weights $\bar W^N_g$, $$
\p\big(Y^N_g=(i,k)|\{\bar{W}^N_{g}\}_g,Y^N_{g-1}=(1,j)\big)={W}_{(g_0-g+1-i,k)}^{N}\mu^N(i)
$$
for every $i\geq1$, $k\in [N]$, and 
$$
\p\big(Y^N_g=(i,k)|\{\bar{W}^N_{g}\}_g,Y^N_{g-1}=(i+1,j)\big)={W}_{(g_0-g,k)}^{N}
$$
for every $i\geq1$ and $k\in [N]$.
\begin{proposition}\label{ParticleEasy}
Set $n=\sum n_i$ to be the total size of the initial sample.
For every $g\geq0$, consider $n$ {(conditional on $\{\bar{W}^N_{g}\}_g$)} independent realizations  of $Y^N_g$, that we call $Y^{N,j}_g=(R^{N,j}_g,L^{N,j}_g)$ for $1\leq j\leq n$. Let $\sigma^{N,1}=\infty$ and $$\sigma^{N,j}=\inf\{g\geq 1: Y^{N,j}_g=Y^{N,j'}_g=(1,k),\text{ for some }j'<j \text{ such that }\sigma^{N,j'}>g, k\in[N]\}.$$  
For all $i\geq1$, set $\sum_{j=1}^n1_{\{R^{N,j}_0=i\}}=B^{N,i}_0$. Then,
 the $i$-th component $B_g^{N,i}$ of the random vector $\bar B_g^N$ is equal in distribution to
$\sum_{j=1}^n1_{\{R^{N,j}_g=i\}}1_{\{\sigma^{N,j}\ge g\}}$, for all $g\geq0$.
\end{proposition}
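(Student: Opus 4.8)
The plan is to prove the stronger statement that the whole process $\tilde B^N:=(\tilde B^N_g)_{g\ge0}$, defined by $\tilde B^N_g=\big(\sum_{j=1}^n\ind_{\{R^{N,j}_g=i\}}\ind_{\{\sigma^{N,j}\ge g\}}\big)_{i=1}^{m_N}$, has the same law as the window process $\{\bar B^N_g\}_{g\ge0}$. Since $\sigma^{N,j}\ge1$ or $=\infty$, we have $\tilde B^N_0=(B^{N,i}_0)_i=\bar B^N_0$ by the chosen initialisation, so it suffices to identify the one-step conditional law of $\tilde B^N$ with the transition $\bar n\mapsto\text{law}\big((n_2,\dots,n_{m_N},0)+M(C^N(n_1))\big)$ appearing in the equivalent description of $\bar B^N$ stated just above, and then to run an induction on $g$ over finite-dimensional distributions.

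For the one-step analysis I fix $g\ge1$ and condition on the states of the $n$ particles at time $g-1$. Call a particle \emph{active} if $\sigma^{N,j}\ge g-1$, and write $n_i$ for the number of active particles with $R^{N,j}_{g-1}=i$, so that $\tilde B^N_{g-1}=(n_1,\dots,n_{m_N})=:\bar n$; a dead particle contributes nothing at times $\ge g-1$ and its further motion is irrelevant, because the coalescence rule only merges a lineage into a strictly older \emph{surviving} one. An active particle with $R^{N,j}_{g-1}=i\ge2$ cannot coalesce at time $g$ (mergers are recorded only at states $(1,k)$), so it is active at time $g$ at coordinate $i-1$: these give the deterministic shift $(n_2,\dots,n_{m_N},0)$. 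I then group the $n_1$ active particles with $R^{N,j}_{g-1}=1$ by their labels $L^{N,j}_{g-1}$ and let $\kappa$ be the number of occupied labels. On each occupied label the active particle of smallest index is a ``leader'': from the definition of $\sigma$ one checks that a leader has $\sigma^{N,j}\ge g$ while every other particle sharing its label has $\sigma^{N,j}=g-1$, i.e.\ it is still counted in $\tilde B^N_{g-1}$ (the window process counts edges, not merged vertices) but not in $\tilde B^N_g$. At the step $g-1\to g$ each of the $\kappa$ leaders performs a reproduction move with an independent jump of law $\mu^N$, so that, conditionally on the particle history up to time $g-1$,
\[
\tilde B^N_g=(n_2,\dots,n_{m_N},0)+M'(\kappa),
\]
with $M'(\kappa)$ multinomial with parameters $\kappa$ and $\{\mu^N(i)\}_{i=1}^{m_N}$ and, given $\kappa$, independent of the past.

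Next I would identify the law of $\kappa$. From the transition kernel of $Y^N$, a particle at a state $(1,k)$ at time $g-1$ reached it either by a reproduction move with jump $1$ or by a decrement move from coordinate $2$, and in both cases its label is, conditionally on the weight vector $\bar W^N_{g_0-g+1}$, distributed as $\bar W^N_{g_0-g+1}$ and independent over particles. Moreover $\bar W^N_{g_0-g+1}$ is independent of all of $\tilde B^N_0,\dots,\tilde B^N_{g-1}$: coordinates are functions of the jumps alone, while the events $\{\sigma^{N,j}\ge g-1\}$ involve only the vertices visited at generations strictly above $g_0-g+1$ (the dummy labels carried by in-transit particles, although drawn from $\bar W^N_{g_0-g+1}$, are re-sampled conditionally independently when the particle reaches coordinate $1$, so they never feed the coalescence rule). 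Hence, given $\{\tilde B^N_{g-1}=\bar n\}$, the variable $\kappa$ is the number of distinct values among $n_1$ i.i.d.\ picks from a $\mathcal{W}^N$-distributed weight vector, i.e.\ $\kappa\ed C^N(n_1)$, and it is independent of $(\tilde B^N_0,\dots,\tilde B^N_{g-1})$. Substituting this in the last display shows that $\p(\tilde B^N_g\in\cdot\mid\tilde B^N_0,\dots,\tilde B^N_{g-1})$ is the law of $(n_2,\dots,n_{m_N},0)+M(C^N(n_1))$, i.e.\ the transition of $\bar B^N$; combined with the matching initial state, an induction on $g$ finishes the proof.

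I expect the main obstacle to be the bookkeeping around a merger: because the window process counts edges rather than merged vertices, a particle must be counted up to and including time $\sigma^{N,j}$, so at a merger the leader and all particles merging into it are still counted, and the count only drops one step later when the leader reproduces alone; making this compatible with the simultaneous decrement of coordinate-$2$ particles into coordinate $1$ (which may themselves merge at that very step) and with the harmless continued motion of dead particles requires care. The secondary, easier, point is the independence of $\kappa$ from the past of $\tilde B^N$, which rests on the fact that the labels inspected by the coalescence rule at generation $g_0-g+1$ are fresh, conditionally i.i.d.\ samples from $\bar W^N_{g_0-g+1}$, not contaminated by the in-transit dummy labels drawn from the same weight vector.
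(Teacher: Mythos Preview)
Your argument is correct. You verify that the particle-count process $\tilde B^N$ has the same one-step conditional law as the window process $\bar B^N$, using its equivalent description $\bar n\mapsto(n_2,\dots,n_{m_N},0)+M(C^N(n_1))$, and then conclude by induction on $g$. The two delicate points you anticipate are the right ones and are handled correctly: (i) a particle with $\sigma^{N,j}=g-1$ is still counted in $\tilde B^N_{g-1}$ but not in $\tilde B^N_g$, so the non-leaders at a shared label are removed exactly one step after the collision, matching the ``first coalesce, then the survivors jump'' mechanism of $C^N$ followed by $M$; (ii) the events $\{\sigma^{N,j}\ge g-1\}$ depend only on labels at times $\le g-2$, which are sampled from weight vectors $\bar W^N_{g_0-t}$ with $t\le g-2$, i.e.\ at generations $\ge g_0-g+2$, so $\tilde B^N_0,\dots,\tilde B^N_{g-1}$ is independent of $\bar W^N_{g_0-g+1}$ and hence $\kappa$ is a fresh $C^N(n_1)$ given the past.

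The paper takes a shorter and conceptually different route. Rather than matching transitions, it builds a pathwise coupling: for each individual $j$ in the sample one observes that $g_0-R^{N,j}_g-g+1$ has the law of the generation of its most recent ancestor living at or before generation $g_0-g$, and one takes $L^{N,j}_g$ to be the label of that ancestor whenever $R^{N,j}_g=1$. Under this coupling $\sigma^{N,j}$ is precisely the window-time at which lineage $j$ merges into a surviving lower-indexed lineage, and the identity $B^{N,i}_g=\sum_j\ind_{\{R^{N,j}_g=i\}}\ind_{\{\sigma^{N,j}\ge g\}}$ holds almost surely, not just in distribution. The coupling explains \emph{why} the particle system represents the window process and yields a stronger (a.s.) statement with essentially no computation; your transition-matching argument is longer but entirely self-contained once the equivalent description of $\bar B^N$ is granted, makes the Markov structure of $\tilde B^N$ explicit, and forces you to check the independence-from-the-past carefully, which the coupling bypasses.
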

\begin{proof}
The proof consists in observing that {$g_0-R^{N,j}_g-g+1$} is equal in distribution  to the generation of the most recent ancestor, living at a generation $g_0-g'$ for some $g'\geq g$, of a fixed individual in the initial sample $S_{g_0}$. So we couple these two processes. 
At the particular times in which ${R^{N,j}_g=1}$ (and thus a coalescence event can occur {in the window process}) we take $L^{N,j}_g$ to be the label of the closest ancestor. 
Then $\sigma^{N,j}$ corresponds to the generation at which individual $j$'s ancestral lineage is involved into a coalescence event {with the ancestral lineage of an individual of lower level}. Under this coupling,
$$
B^{N,i}_g=\sum_{j=1}^n1_{\{R^{N,j}_g=i\}}1_{\{\sigma^{N,j}\ge g\}}
$$
almost surely.
\end{proof}

{At this point it could seem unnecessary that the process $L$ jumps at every time since it looks like it only has a role when $R$ is reaching 1. However this independent construction of $R$ and $L$ will be important in the proof of Theorem \ref{cv1KKL}.}
The chain $\{Y^N_g\}_{g\geq0}$ provides a very convenient coupling to the ancestral and the window processes, mainly because $\{R^N_g\}_{g\geq0}$ has an invariant measure given by 
\begin{equation}\label{nuN}
\nu^N(i)=\frac{\p(J^N_v\geq i)}{\E[J^N_v]}.
\end{equation} 
To see this, just observe that the chain has two types of behaviors. Using the notation $\p_j(\cdot)=\p(\cdot|R_{0}^N=j)$, we have
\begin{enumerate}
\item Deterministic transitions: if $j>1,$ then
 $\p_j(R_{1}^N={j-1})=1$
\item Random transitions:  for $j\geq1$,
 $\p_1(R_{1}^N={j})=\p(J^N_v=j)=\mu^N(j)$. 
\end{enumerate}
Then, 
\begin{eqnarray*}
\sum_{j=1}^\infty \p_j(R_{1}^N={i})\nu^N(j)&=&\p_{i+1}(R_{1}^N={i})\nu^N(i+1)+\p_1(R_{1}^N={i})\nu^N(1)\\&=&\frac{\p(J^N_v\geq i+1)}{\E[J^N_v]}+\frac{\p(J^N_v=i)}{\E[J^N_v]}\\
&=&\nu^N(i).
\end{eqnarray*}

{Our way to compare two Markov chains consists in applying coupling concepts developed in \cite{Mixing}.} Let us first recall the definition of mixing time (see page 55 of \cite{Mixing}).
 We denote $d^N(g)=\max_{j\in [m_N]}||\mathbb{P}_j(R^N_g\in\cdot)-\nu^N(\cdot)||_{TV}$ and the mixing time {$\tau_N=\inf\{g\geq0:d^N(g)<1/4\}$. We allow $\tau_N=0$, because this is true in the important case $\p(J^N_v=1)=1$, which is the case without seed bank. }


The main theorem of \cite{KKL} (proved for $\mu^N$ with finite support and  extended to finite expectation in \cite{BGKS}) consists in showing that the $L_1$  norm of the  ancestral process converges weakly to the block counting process of the Kingman coalescent under a constant time change. 
Here we extend this result to the window process and to some more general Cannings' mechanism.


\begin{theorem}[Convergence of the window process I: Kingman limit]\label{cv1KKL}
Consider a seed bank di-graph with parameters $N, \mathcal W^N$ and $\mu^N$.
Suppose that  $\beta_N=\E[J^N_v]<\infty$. Let $\tau_N$ be the mixing time of $\{R^{N}_g\}_{g\geq0}$, $c_N=N\E[( W^{N}_{v})^2]$ and $d_N=N\E[(W^{N}_{v})^3]$. Assume that  $\mu^N(1)>0$ and that
 \begin{equation}\label{cdcasoK}c_N/\beta_N^2 \rightarrow 0,\quad
N^\varepsilon\tau_Nc_N\rightarrow 0,\quad
(1/4)^{N^\varepsilon}\beta_N^2\rightarrow 0\quad\text{and}\quad
d_N/(\beta_Nc_N)\rightarrow 0.
\end{equation}
for some $\varepsilon>0$. Then, let $\{\bar B^N\}_{N\geq1}$ be the sequence of window processes with parameters $N$ and $\mu^N$ and starting condition $\bar B_{0}^N=\bar n$ for all $N\in\N$ big enough. Then,
\begin{equation}\label{conv}
\lim_{N\rightarrow \infty}\{|\bar B_{\lfloor t \beta_N^2/c_N\rfloor}^N|\}_{t\geq0}= \{N_t^K\}_{t\geq0}
\end{equation}
{in the finite dimension sense}, 
 where $\{N_t^{K}\}_{t\geq0}$ stands for the block counting process of a Kingman coalescent.
 
Furthermore, suppose that $\nu^N$ converges to a measure $\nu$ as $N\rightarrow \infty$. Let $V^{t,K}$ be a (conditional) multinomial random variable with parameters $N^K_t$ and $\nu$. For any fixed time $t>0$, in distribution,

\begin{equation}\label{multinomialdist}
\lim_{N\rightarrow \infty} \bar{B}_{\lfloor t \beta_N^2/c_N\rfloor}^N= V^{t,K}.
\end{equation}

\end{theorem}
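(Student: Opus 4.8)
The plan is to exploit the particle representation of Proposition~\ref{ParticleEasy}, which reduces everything to a question about the independent walks $R^{N,j}$ and the Cannings coincidences of their labels, and then to run the programme of \cite{KKL,BGKS}. Write $T_N:=\beta_N^2/c_N$ (which tends to $\infty$ by the first condition in \eqref{cdcasoK}) and $g_N(t):=\lfloor tT_N\rfloor$. By Proposition~\ref{ParticleEasy} we may realise $|\bar B^N_g|=\#\{j\le n:\sigma^{N,j}\ge g\}$, a non-increasing process started at the fixed value $n=\sum_i n_i$. Looking at a pair $j'<j$ in isolation, $\sigma^{(j',j)}=\inf\{g\in\mathcal T_{j',j}:L^{N,j'}_g=L^{N,j}_g\}$ with $\mathcal T_{j',j}:=\{g\ge1:R^{N,j'}_g=R^{N,j}_g=1\}$. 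Since the labels are resampled afresh at each generation from independent, generation-indexed Cannings weights, conditioning on $\mathcal T_{j',j}$ (a function of the jump randomness only) the coincidence events $\{L^{N,j'}_g=L^{N,j}_g\}_{g\in\mathcal T_{j',j}}$ are conditionally independent with $W$-averaged success probability $c_N$, whence for any starting configuration in $[m_N]^{m_N}$,
\[
\mathbb{P}\big(\sigma^{(j',j)}>g\big)=\mathbb{E}\big[(1-c_N)^{|\mathcal T_{j',j}\cap[1,g]|}\big].
\]

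The first key step is the single-pair limit $\mathbb{P}(\sigma^{(j',j)}>g_N(t))\to e^{-t}$, uniformly in the start. Its engine is the first-moment estimate $c_N\,\mathbb{E}\big[|\mathcal T_{j',j}\cap[1,g_N(t)]|\big]\to t$: by independence of the two walks each summand equals $\mathbb{P}_\bullet(R^N_g=1)^2$; recalling $\nu^N(1)=\mathbb{P}(J^N_v\ge1)/\beta_N=1/\beta_N$ from \eqref{nuN} and splitting the sum at $g=\lceil N^\varepsilon\tau_N\rceil$, the initial block is bounded by $c_NN^\varepsilon\tau_N\to0$ (second condition), while past it $\max_{i\in[m_N]}|\mathbb{P}_i(R^N_g=1)-\beta_N^{-1}|\le d^N(g)\le (1/2)^{\lfloor g/\tau_N\rfloor}$ by submultiplicativity of total variation and the definition of $\tau_N$ (for $\tau_N=0$, i.e.\ $J^N_v\equiv1$, the statement reduces to the classical Cannings one), so multiplying the tail sum by $c_N$ gives $t$ up to errors of order $\big[(1/4)^{N^\varepsilon}\beta_N^2\big]^{1/2}$ and $(1/4)^{N^\varepsilon}\beta_N^2$, both $\to0$ by the third condition. (Aperiodicity of $\{R^N_g\}$ at state $1$, needed for the mixing input, is where $\mu^N(1)>0$ enters.) A complementary second-moment / Chen--Stein bound — using the mixing of $\{R^N_g\}$ to show that two independent copies visit state $1$ simultaneously with density $\asymp\beta_N^{-2}$ and that consecutive joint visits are $\Theta(\beta_N^2)$ apart, so $\mathcal T_{j',j}$ does not clump on the scale $T_N$ — shows that $|\mathcal T_{j',j}\cap[1,g_N(t)]|$ is asymptotically Poisson with mean $t/c_N+o(1/c_N)$; inserting this into the displayed identity yields $\mathbb{P}(\sigma^{(j',j)}>g_N(t))\to e^{-t}$, uniformly since the estimates used only that the walks start in $[m_N]$.

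Next I would assemble the Kingman limit \eqref{conv}. Off an event of probability $o(1)$, no non-binary merger occurs on $[0,g_N(t)]$: three equilibrated lineages at state $1$ with a common parent have probability $\lesssim\nu^N(1)^3 d_N\asymp\beta_N^{-3}d_N$ per generation, summing to $O(d_N/(\beta_Nc_N))\to0$ by the fourth condition, while two disjoint pairs matching at one generation have probability $\lesssim\beta_N^{-4}\,\mathbb{E}[(\sum_kW_k^2)^2]\le\beta_N^{-4}c_N$ per generation, summing to $O(1/\beta_N^2)\to0$ when $\beta_N\to\infty$ and to $o(1)$ when $\beta_N$ is bounded by the Kingman-domain estimate $\mathbb{E}[(\sum_kW_k^2)^2]=o(c_N)$ (which follows from $d_N=o(c_N)$). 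Hence, a.a.s., every coalescence is binary and at a distinct generation; between coalescences the surviving lineages evolve independently, and after a merger only the newly merged lineage must re-equilibrate, which costs $O(\tau_N)=o(T_N)$ generations (as $\tau_Nc_N\to0$). By the Markov property of $\bar B^N$, the strong Markov property at the merger times, and the single-pair limit applied to all $\binom{k}{2}$ pairs with asymptotically independent clocks (distinct pairs — even ones sharing a lineage — meet jointly only on an $o(1)$-event, whose excess coalescence is already accounted for above), the rescaled waiting time for the next merger when $k$ lineages are present converges to an exponential of rate $\binom{k}{2}$, and the merging pair is asymptotically uniform. These are exactly the holding times and jump chain of $\{N^K_t\}$, which has finitely many jumps on compacts a.s., so the standard equivalence between convergence of (holding times, jump chain) and finite-dimensional convergence for pure-jump chains — or the coupling comparison of \cite{Mixing} — yields \eqref{conv}. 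For \eqref{multinomialdist}, fix $t>0$, put $g_N:=g_N(t)$, and condition on $|\bar B^N_{g_N}|=k$: then $\bar B^N_{g_N}$ is the empirical level-count of the positions $R^{N,j}_{g_N}$ of the $k$ surviving lineages, each of which has run for $g_N\gg\tau_N$ generations, so its marginal is within $(1/2)^{\lfloor g_N/\tau_N\rfloor}\to0$ of $\nu^N$; conditioning on survival perturbs this by $O(\mathbb{P}(\text{merged}))=o(1)$, and distinct surviving lineages are asymptotically independent, being exactly independent off the $o(1)$-event that they ever meet and merge. Since $\nu^N\to\nu$, the conditional law of $\bar B^N_{g_N}$ given $|\bar B^N_{g_N}|=k$ converges to the multinomial with parameters $k$ and $\nu$; combined with $|\bar B^N_{g_N}|\Rightarrow N^K_t$ from \eqref{conv} this gives \eqref{multinomialdist}.

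The step I expect to be the main obstacle is the decorrelation analysis behind the single-pair limit and the asymptotic independence of the $\binom{k}{2}$ pair clocks. The first-moment estimate falls out fairly directly from the tailored conditions in \eqref{cdcasoK}; the work lies in (i) the second-moment / Poisson control of $|\mathcal T_{j',j}\cap[1,g_N(t)]|$ — ruling out clumping of simultaneous visits to state $1$ by two independent copies of $\{R^N_g\}$ — and (ii) quantifying the $o(T_N)$ re-equilibration of the $R$-coordinates after each merger; both require a genuine coupling / renewal argument for $\{R^N_g\}$ beyond the bare mixing-time input, and it is precisely here that the conditions $N^\varepsilon\tau_Nc_N\to0$ and $(1/4)^{N^\varepsilon}\beta_N^2\to0$ are consumed.
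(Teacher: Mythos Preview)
Your approach is viable and follows the classical KKL/BGKS template, but the paper takes a genuinely different and shorter route that completely sidesteps the obstacle you identify at the end. Instead of analysing the actual particles $R^{N,j}$ and controlling the Poisson fluctuations of $|\mathcal T_{j',j}\cap[1,g_N(t)]|$, the paper introduces an \emph{artificial} window process $\bar Z^N$ in which the $R$-coordinates are replaced by i.i.d.\ $\nu^N$-distributed variables $\underline R^{N,j}_g$ (the labels $L^{N,j}_g$ are kept). Because $\nu^N(1)=1/\beta_N$ exactly, the process $|\bar Z^N_g|$ is Markov and its one-step generator is computed in three lines: it equals $\frac{c_N}{\beta_N^2}\big[\binom{n}{2}(f(n-1)-f(n))+O(d_N/(\beta_Nc_N))\big]$, so $|\bar Z^N_{\lfloor tT_N\rfloor}|\Rightarrow N^K_t$ immediately. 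No second moment, no Chen--Stein, no separate control of non-binary mergers, no assembly of pair clocks.

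The remaining work is a single trajectory coupling of $\bar B^N$ and $\bar Z^N$: at each potential jump time $\rho_i$ (the times where two labels coincide, a function of the $L$'s only), the paper uses the optimal TV coupling between the law of $(R^{N,1}_{\rho_i},\dots,R^{N,n}_{\rho_i})$ and $(\nu^N)^{\otimes n}$. Since $\rho_1$ stochastically dominates a geometric of parameter $n^2c_N$ while the chain mixes in $\tau_N$ steps, the hypotheses $N^\varepsilon\tau_Nc_N\to0$ and $(1/4)^{N^\varepsilon}\beta_N^2\to0$ give $p_N:=\inf_{\bar n}\mathbb P(\text{coupling succeeds at }\rho_1)\to1$; then $\mathbb P(\text{trajectories agree forever})\ge\E[p_N^{T^N_1}]\to1$, where $T^N_1$ is bounded by a sum of $n$ geometrics of parameter $\beta_N^{-2}$. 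The multinomial statement \eqref{multinomialdist} follows by the same device: $\bar Z^N_{g_N}$ is \emph{exactly} multinomial$(|\bar Z^N_{g_N}|,\nu^N)$ by construction, and the coupling transfers this to $\bar B^N_{g_N}$. So the conditions in \eqref{cdcasoK} are consumed not in a decorrelation analysis of joint visits to state~$1$, but in making this single one-shot coupling succeed; your programme would work, but the stationary-replacement trick is what the paper actually does and is considerably cleaner.
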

Note that when $\beta_N\to\beta<\infty$ the third condition of Theorem \ref{cv1KKL} is automatically fulfilled. On the other side, when $\beta_N\rightarrow \infty$, then the fourth condition is always fulfilled because $d_N/c_N\leq 1$. The latter reflects the fact that a {stronger seed bank effect (in the sense that the expected number of generations separating an individual from its ancestor tends to infinity)} makes impossible the existence of multiple mergers. Theorem \ref{cv2KKL} below discusses the interplay between seed banks and random genetic drift.
{Note also that the second condition implies that $\tau_N<\infty$ for all but finitely many $N$, meaning that the support of $\mu^N$ is finite for all but at most finitely many $N$.} {In general, a process with finite mixing time can have an infinite support. In our case, a particle that starts in $k$ takes $k$ deterministic steps before jumping to a random location. This enforces that the support is a lower bound for the mixing time.}

 {
\begin{remark}
The arguments that we use in the proof of the theorem establish a convergence result in the finite dimension sense for \eqref{conv}. However this result can be strengthened into a convergence in distribution thanks to classical limit theorems, see for example {Theorem 17.25 in \cite{Kallenberg_2021}}. This will also be the case in the forthcoming Theorems \ref{cv2KKL}, \ref{Forward}, \ref{convergeKM}, \ref{cv2mutation} and \ref{Forwardmutation}.
\end{remark}}

{Recall from the seminal work of Schweinsberg \cite{Sch00} that $\Xi$-coalescents are characterized by a measure on $\Delta$, the infinite simplex on $[0,1]$. Every realization of this measure $\Xi$ describes the coalescence rule at every jump time of the process.} 
Let $F_\beta: \Delta\mapsto\Delta $ be such that for $A\subset\Delta$, $F_\beta(A)=\{ \bar y/\beta, \bar y\in A\}$. To any finite measure $\Xi$ over $\Delta$, we associate a finite measure $\Xi^\beta$ defined by the rule 
{$$\Xi^\beta(A):=\Xi(F_{1/\beta}(A)\cap \Delta)$$ for any borelian $A\subseteq \dfrac{1}{\beta}\Delta$, or equivalently
$$
\Xi^\beta(F_\beta(B))=\Xi(B)
$$
for any borelian $B$ on $\Delta$.}
Observe that $\Xi^\beta$ associates no weight on mass partitions $\bar y=(y_1,y_2,\dots)$ such that $\sum y_i>1/\beta$.
It is also remarkable that the seed bank effect sends the class of $\Lambda$-coalescents into itself. 
As an example, the $Beta(2-\alpha,\alpha)$ coalescent with the characteristic measure, restricted to $(0,1)$, $\Xi(A)=\int_{A}\frac{1}{\Gamma(2-\alpha)\Gamma(\alpha)}x^{1-\alpha}(1-x)^{\alpha-1}dx$ turns to a coalescent with measure, now restricted to $(0,1/\beta)$,
 $\Xi^\beta(A)=\int_{A}\frac{1}{\Gamma(2-\alpha)\Gamma(\alpha)}x^{1-\alpha}(\frac{1}{\beta}-x)^{\alpha-1}dx$.

\begin{theorem}[Convergence of the window process II: $\Xi$ limit]\label{cv2KKL}
Consider a seed bank di-graph with parameters $N, \mathcal W^N$ and $\mu^N$.
Suppose that  $\beta_N=\E[J^N_v]\to\beta <\infty$. Assume that the ancestral process of a Cannings model driven by $\mathcal{W}^N$, that we denote by $\{C^N_g\}_{g\geq0}$ is such that
\begin{equation}\label{cdcasoXi}
\lim_{N\rightarrow \infty}\{C^N_{\lfloor t/c_N \rfloor}\}_{t\geq0})= \{N_t^\Xi\}_{t\geq0}
\end{equation}
where $\{N_t^{\Xi}\}_{t\geq0}$ stands for the block counting process of a $\Xi$-coalescent.
Then 
\begin{equation}\label{convXi}
\lim_{N\rightarrow \infty}\{|\bar B_{\lfloor t/c_N\rfloor}^N|\}_{t\geq0}= \{N_{t}^{\Xi^\beta}\}_{t\geq0}
\end{equation}
in the finite dimension sense.

Furthermore, suppose that $\nu^N$ converges to a measure $\nu$ as $N\rightarrow \infty$. Let $V^{t,\Xi^\beta}$ be a (conditional) multinomial random variable with parameters $N_{t}^{\Xi^\beta}$ and $\nu$. For any fixed time $t>0$, in distribution, 
\begin{equation}\label{multinomialdistXi}
\lim_{N\rightarrow \infty} B_{\lfloor t/c_N\rfloor}^N= V^{t,\Xi^\beta}.
\end{equation}

\end{theorem}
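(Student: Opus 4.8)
My plan is to run the proof through the particle system of Proposition~\ref{ParticleEasy}, following the same architecture as the proof of Theorem~\ref{cv1KKL} but now importing the multiple-merger structure directly from hypothesis~\eqref{cdcasoXi} rather than re-deriving a Kingman limit. Recall that $|\bar B^N_g|=\#\{j:\sigma^{N,j}\ge g\}$, and that an absorption $\sigma^{N,j}$ can only occur at a generation at which the $R$-chain of lineage $j$ (and of the lineage it merges with) sits at level~$1$; at such generations the labels are, conditionally on $\bar W^N_g$, independent draws from the Cannings weights. Since the starting configuration $\bar B^N_0=\bar n$ is fixed, throughout we deal with only $|\bar n|=O(1)$ lineages. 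The first step is to exploit that each $R$-chain is positive recurrent with stationary law $\nu^N$ (so that its level-$1$ frequency is $\nu^N(1)=1/\beta_N$) and that the $|\bar n|$ copies $(R^{N,j},L^{N,j})$ are independent: by the ergodic theorem for the product chain, over a horizon of $\lfloor t/c_N\rfloor$ generations the density of generations at which a prescribed subset $S$ of lineages is simultaneously at level~$1$ converges to $\nu^N(1)^{|S|}=\beta_N^{-|S|}$. This is the analogue of the mixing-time input of Theorem~\ref{cv1KKL}; as there, I would realise it as a coupling (following \cite{Mixing}) of $\bar B^N$ with the process in which every surviving lineage has its level refreshed from $\nu^N$ at each generation, and control the coupling error over $[0,\lfloor t/c_N\rfloor]$.

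For that refreshed process I would condition on the sequence of Cannings reproduction events and their offspring-mass partitions. If the event at a given generation has mass partition $\bar y=(y_1,y_2,\dots)$ (the paintbox of $\mathcal W^N$), each surviving lineage independently lands in merger-class $i$ with probability $\nu^N(1)\,y_i\to y_i/\beta$ and otherwise stays a singleton; hence the merger induced on the $|\bar n|$ blocks is exactly a paintbox event with mass partition $\bar y/\beta=F_\beta(\bar y)$. Over $\lfloor t/c_N\rfloor$ consecutive generations, hypothesis~\eqref{cdcasoXi} — read through the M\"ohle--Sagitov/Schweinsberg characterisation \cite{Sagitov_2001,Sch00}, i.e.\ the convergence after rescaling by $c_N$ of the joint factorial moments of $\mathcal W^N$, equivalently of the intensity of paintbox events — identifies the limiting Poissonian family of reproduction events of the Cannings model. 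Pushing this family forward by $F_\beta$ and reassembling the induced block mergers yields a continuous-time coalescent whose merger rates are those of the $\Xi^\beta$-coalescent (one must also check that the overall time-constant agrees with the rescaling in \eqref{convXi}), which gives \eqref{convXi} in the finite-dimensional sense. For \eqref{multinomialdistXi}, fix $t$: conditionally on the surviving lineages at generation $\lfloor t/c_N\rfloor$, their levels are, by the same equidistribution, asymptotically independent with common law $\nu^N\to\nu$, so $\bar B^N_{\lfloor t/c_N\rfloor}$ converges in law to a multinomial$(N^{\Xi^\beta}_t,\nu)$ vector. Convergence of the full vector process and the passage to functional convergence then follow as in the remark after Theorem~\ref{cv1KKL}.

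The step I expect to be the main obstacle is the bookkeeping of the merger rates and the time scale. One must verify not merely that a single Cannings paintbox $\bar y$ acts as $\bar y/\beta$ on the surviving lineages, but that the rate at which a fixed finite set of lineages sees merger-inducing events converges to the correct value. This forces one to: (i) keep exact track of the normalisation $(\bar y,\bar y)^{-1}$ attached to $\Xi$ and of its Dirac component $\Xi(\{0\})$, which transform differently under $F_\beta$ and are the source of the powers of $\beta$ that must be reconciled with the rescaling in \eqref{convXi}; (ii) control the interplay between the ergodic averaging of the $R$-chains and the rate $\asymp c_N$ of reproduction events, i.e.\ establish the equidistribution uniformly along the random absorption times even without a hypothesis of the form $\tau_N c_N\to 0$ — here the fixed bounded sample size is essential, since only finitely many independent renewal-type chains are involved; and (iii) ensure that over the horizon $\lfloor t/c_N\rfloor$ distinct reproduction events contributing to the genealogy do not merge in the limit, so that the limit object is a genuine $\Xi^\beta$-coalescent. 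Once these estimates are in hand, the identification of the limit and the multinomial statement are routine.
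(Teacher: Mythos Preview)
Your architecture---couple $\bar B^N$ to the ``refreshed'' process $\bar Z^N$ in which each surviving lineage draws its $R$-level from $\nu^N$ independently at every step, then identify the limit of $|\bar Z^N|$---is exactly the paper's. The paper is shorter because it dissolves your two main obstacles rather than confronting them. For obstacle~(ii), the absence of a mixing-time hypothesis: the paper observes that $\beta_N\to\beta<\infty$ makes the mixing time $\tau_N$ of the $R$-chain of order~$1$, and since \eqref{cdcasoXi} implies $c_N\to 0$, the first three conditions of \eqref{cdcasoK} hold automatically; the optimal-coupling argument and the derivation of \eqref{multinomialdistXi} are then imported verbatim from Theorem~\ref{cv1KKL}. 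For obstacle~(i), the bookkeeping of $\Xi(\{0\})$, the $(\bar y,\bar y)^{-1}$ normalisation and the time constant: the paper sidesteps the M\"ohle--Sagitov/Schweinsberg route entirely by a one-step generator comparison. Writing $I_i$ for the indicator that $L^{N,i}_1=L^{N,j}_1$ for some $j<i$, hypothesis \eqref{cdcasoXi} is precisely the statement that $c_N^{-1}\E[f(n-\sum_i I_i)-f(n)]$ converges to the generator of $N^\Xi$; the generator of $|\bar Z^N_{\lfloor t/c_N\rfloor}|$ is $c_N^{-1}\E[f(n-\sum_i I_i\,1_{\{\underline R^{N,i}_1=1\}})-f(n)]$, and since the indicators $1_{\{\underline R^{N,i}_1=1\}}$ are i.i.d.\ Bernoulli$(\nu^N(1))$ with $\nu^N(1)\to 1/\beta$ and independent of the labels, the limit is the generator of $N^{\Xi^\beta}$ directly, with the time scale already matched. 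Your paintbox-pushforward computation would reach the same conclusion, but the generator shortcut is two lines.
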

{Note that the rates of the $\Xi^\beta$-coalescent converge to those of the Kingman coalescent when $\beta\to\infty$, converting the Kingman coalescent to a limit model when $\beta$ becomes large. This intuitively coincides with the hypothesis on $\beta$ in Theorem \ref{cv1KKL}.
}
\begin{remark}
The site frequency spectrum (SFS) provides a convenient way to visualize the transformations induced by the mapping $\Xi^\beta$  of Theorem \ref{cv2KKL} to the shape of the coalescent. The SFS is a vector proportional to the branch lengths $(L_1, \dots, L_{n-1})$ of the coalescent tree, where $L_i$ stands for the length of the lineages with exactly $i$ leaves in the original sample of size $n$.
In Figure \ref{SFS} we see how the branch lengths are increased in a non linear way in the case of the Bolthausen-Sznitman coalescent.
Note that when the genealogies are in the domain of attraction of the Kingman coalescent (Theorem \ref{cv1KKL}), every coordinate of the SFS will be multiplied by the constant $\beta$.
\end{remark}
\begin{figure}
\begin{center}
\subfigure{\includegraphics[width=6cm]{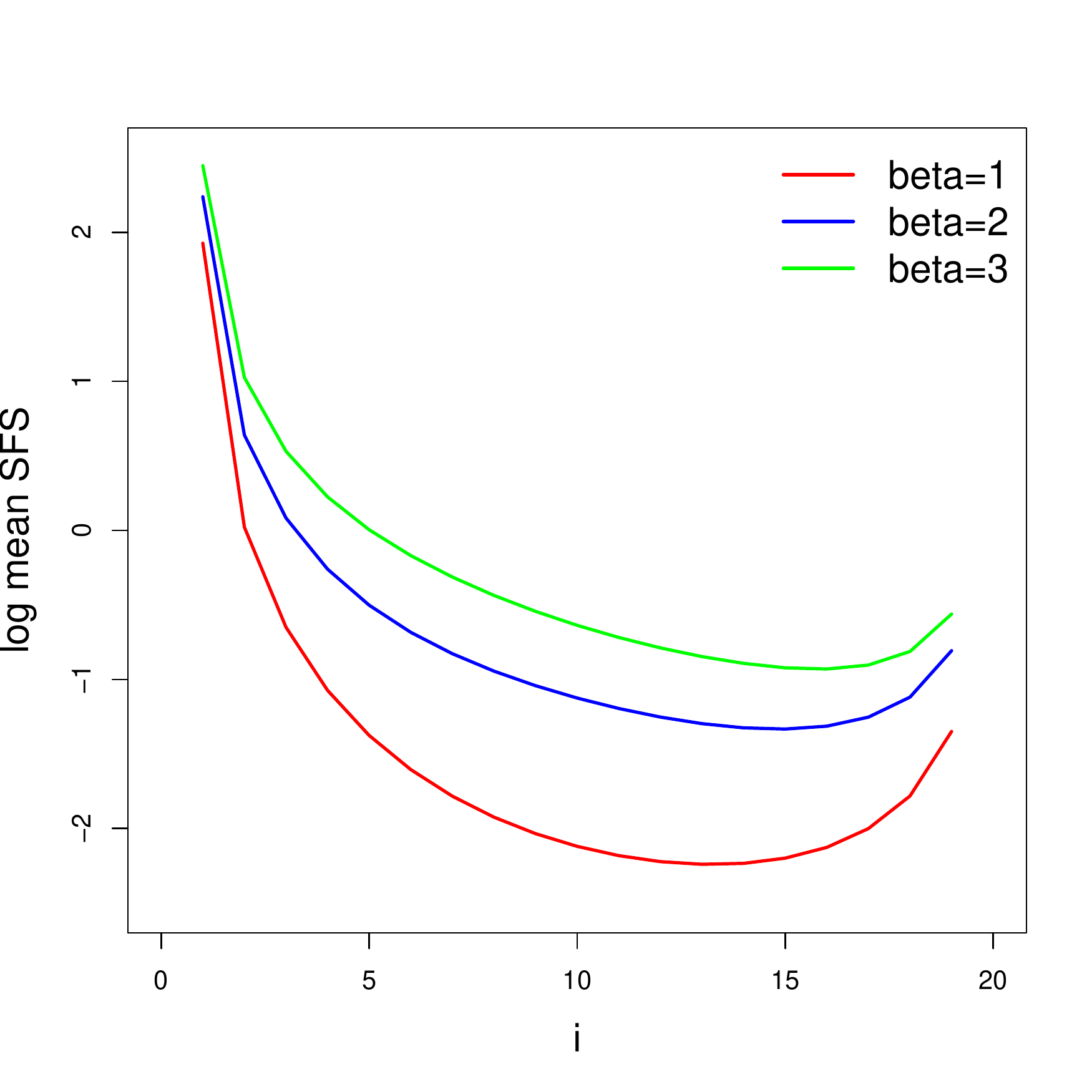}} \quad
\subfigure{\includegraphics[width=6cm]{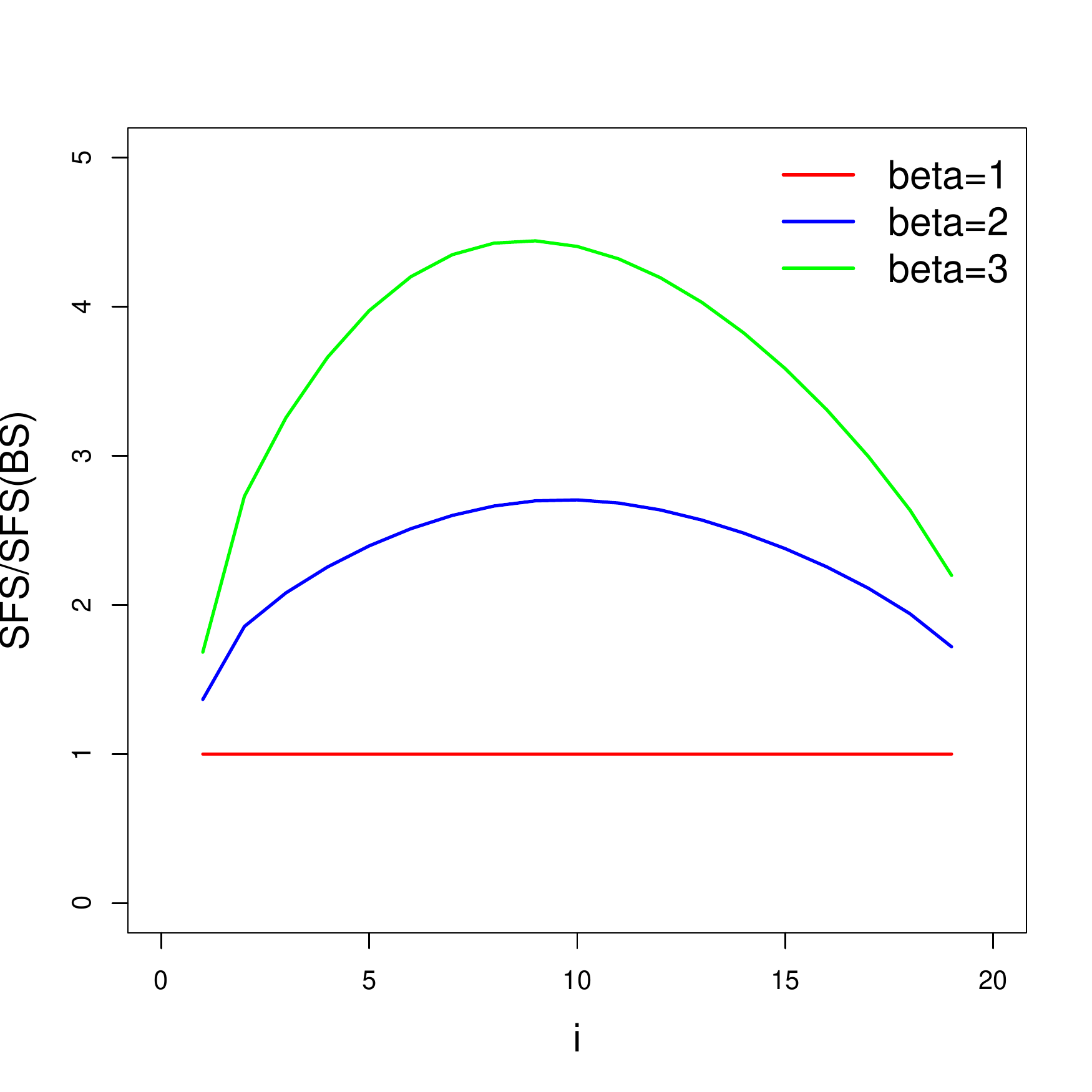}} 
\caption{Expected SFS of the $\Xi^\beta$-coalescent where $\Xi$ is the uniform measure on $(0,1)$ (Bolthausen-Sznitman coalescent) and $\beta=1,2,3$. 
The initial sample size is $n=20$.
The values are obtained thanks to computational methods based on phase-type theory \cite{HSJB}. An exact formula for $\beta=1$ can be found in \cite{KSJW}. 
Left: Expected SFS in logarithmic scale. Right: Expected SFS divided by the expected SFS of the Bolthausen-Sznitman coalescent.}\label{SFS}
\end{center}
\end{figure}

\begin{proof}[Proof of Theorem \ref{cv1KKL}]
The proof consists in coupling the window process $\{\bar B^N_g\}_{g\geq0}$ to a process which is "always in stationarity". 
If we suppose that $\{\bar B_{g}^N\}_{g\geq0}$ starts a.s. with one lineage, i.e. $\bar B_{0}^N=e_k$ for some $k$, it is straightforward that  it has a stationary distribution $\bar \nu^N$ given by $\bar \nu^N(e_i)=\nu^N(i)$.
Now, let $\underline{Y}^{N,j}_g=(\underline{R}^{N,j}_g,L^{N,j}_g)$ where $\{\underline{R}^{N,j}_g\}_{g\geq0}$ is a sequence of independent $\nu^N$-distributed random variables.  Let $$\underline{\sigma}^{N,j}=\inf\{g\geq1: \underline{Y}^{N,j}_g=\underline{Y}^{N,j'}_g=(1,k),\text{ for some }j'<j \text{ such that } \underline{\sigma}^{N,j'}>g, k\in[N]\}.$$
Hence, inspired by the coupling of Proposition \ref{ParticleEasy}, we define an artificial window process by
$$
\bar Z^N_g=(Z^{N,1}_g,\dots,Z^{N,m_N}_g)
$$
where, as $n=\sum n_i$,
$$
Z^{N,i}_g=\sum_{j=1}^n1_{\{\underline{R}^{N,j}_g=i\}}1_{\{\underline{\sigma}^{N,j}\geq g\}}.
$$
The process $\{|\bar Z^{N}_g|\}_{g\geq0}$ is Markovian. 

We now proceed in two steps to prove \eqref{conv}. First, we calculate the generator of $\{|\bar Z^{N}_g|\}_{g\geq0}$ in order to discover its scaling limit. Let $f:\N\rightarrow \R$ be a bounded function. 
Then 
\begin{align*}
\mathcal G^Nf(n)&=\E[f(|\bar Z^{N}_1|)-f(n)]=\p(|\bar Z^{N}_1|=n-1)[f(n-1)-f(n)]+O(\p(|\bar Z^{N}_1|=n-2))\\
&=\binom{n}{2}{c_N}{(\nu^N(1))^2}[f(n-1)-f(n)]+O({d_N}{(\nu^N(1))^3})\\
&=\frac{c_N}{\beta_N^2}\left[\binom{n}{2}[f(n-1)-f(n)]+O(\frac{d_N}{\beta_Nc_N})\right].
\end{align*}
 So we conclude that
\begin{equation}\label{convweak}
\lim_{N\to\infty}\{|\bar Z^{N}_{\lfloor\beta_N^2 t/c_N\rfloor}|\}_{t\geq0}= \{N^K_{t}\}_{t\geq0}
\end{equation}
in the finite dimension sense.

Second, let us couple $\{|\bar Z^{N}_{\lfloor\beta_N^2 t/c_N \rfloor}|\}_{t\geq0}$ and $\{|\bar B^{N}_{\lfloor \beta_N^2t/c_N\rfloor}|\}_{t\geq0}$ to show that the same limit holds for the rescaled window process. 
The coupling consists in constructing for every $i\geq1$ the random variable $(\underline{R}^{N,1}_{\rho_i},\dots,\underline{R}^{N,m_N}_{\rho_i})$ as the optimal coupling {(defined in Remark 4.8, of \cite{Mixing})} of $(R^{N,1}_{\rho_i},...,R^{N,m_N}_{\rho_i})$,  {which depends on the initial condition $\bar n\in [N]^{m_N}$,} and the stationary distribution $(\nu^N)^{\otimes m_N}$, 
where the times $\{\rho_i\}_{i\geq1}$ correspond to the times where the processes $\{|\bar Z^{N}_{\lfloor{\beta_N^2} t/c_N\rfloor}|\}_{t\geq0}$ and $\{|\bar B^{N}_{\lfloor \beta_N^2t/c_N \rfloor}|\}_{t\geq0}$ can jump.
More precisely, if we denote for any $p,q\in[n]$, $\rho^{N,p,q}_k=\inf\{g>\rho^{N,p,q}_{k-1}:L^{N,p}_g=L^{N,q}_g\}$ (with $\rho_0^{N,p,q}=0$), then $\rho_i=\inf\{g>\rho_{i-1}:g=\rho^{N,p,q}_k$ for some $p,q\in[n]$ {with $p\neq q$,} and some $k\in\N \}$ (with $\rho_0{=0}$).
Note that we do not precise the dependence on $N$ in the notation.
 In our case, the probability that the coupling is successful 
\begin{eqnarray*}
p_N&:=&\inf_{\bar n \in [N]^{m_N}} \p_{\bar n}((\underline{R}^{N,1}_{\rho_1},\dots,\underline{R}^{N,m_N}_{\rho_1})=(R^{N,1}_{\rho_1},\dots,R^{N,m_N}_{\rho_1}))\\
&=&1-\sup_{\bar n \in [N]^{m_N}}\p_{\bar n}((\underline{R}^{N,1}_{\rho_1},\dots,\underline{R}^{N,m_N}_{\rho_1})\neq(R^{N,1}_{\rho_1},\dots,R^{N,m_N}_{\rho_1}))\\
&=&1-\sup_{\bar n \in [N]^{m_N}}\| \p_{\bar n}((R^{N,1}_{\rho_1},\dots,R^{N,m_N}_{\rho_1})=\cdot)-(\nu^N)^{\otimes m_N}(\cdot)\|_{TV}
\end{eqnarray*}
where $\p_{\bar n}$ stands for the law of $\{R^{N,1}_{g},\dots,R^{N,m_N}_{g}\}_{g\geq0}$ (or $\{\underline R^{N,1}_{g},\dots,\underline R^{N,m_N}_{g}\}_{g\geq0}$)  starting at the state $\bar n\in [N]^{m_N}$ and where Proposition 4.7 in \cite{Mixing} is used for the last equality.
To prove that $p_N\to1$ when $N\to\infty$, take $\varepsilon>0$ such that $N^\varepsilon\tau_N c_N\rightarrow 0$. 
The condition $\mu^N(1)>0$ implies that, for any $i\geq1$,  the processes $\{R_g^{N,i}\}_{g\geq0}$ are irreducible. So, by {equation (4.29)  in \cite{Mixing} and the definition of $\tau_N$}, we have
 
\begin{equation}\label{coumpl}
||\p_{\bar n}((R^{N,1}_{N^\varepsilon\tau_N },\dots,R^{N,m_N}_{N^\varepsilon\tau_N })=\cdot)-(\nu^N)^{\otimes m_N}(\cdot)||_{TV}<(1/4)^{N^\varepsilon}.
\end{equation}
Then observe that, stochastically, $\rho_1\geq \Gamma^N$ where $\Gamma^N$ is a geometric random variable with parameter $n^2c_N\geq\binom n2c_N$ {(think of a Cannings model without dormancy)} and thus {$$\p(\rho_1<N^\varepsilon\tau_N )\leq \p(\Gamma^N<N^\varepsilon\tau_N)=1-(1-n^2c_N)^{N^\varepsilon\tau_N}\rightarrow 0.$$}
{This implies that $p_N\ge1-(1/4)^{N^\varepsilon}$.}

 Let $T^N_1=\inf\{i\geq1: |\bar Z^{N}_{\rho_i}|=1\}$. 
Observe that if $|\bar Z^{N}_{0}|=n$ (we will use the notation $\p_n$), stochastically 
\begin{equation}\label{trucmuche}T^N_1\leq \sum_{i=1}^nG^N_i \end{equation}
where the $G^N_i$'s are independent geometric random variables with parameter $\beta_N^{-2}$ {(the probability that two particles in stationarity reach the origin)}. 
We finish the proof of  \eqref{conv} noting that the trajectories of both processes are identical with overwhelming probability
{\begin{eqnarray*}\label{trajectories}
\p_n(\sup_t||\bar Z^{N}_{\lfloor t\beta_N^2/c_N\rfloor}|-|\bar B^{N}_{\lfloor t\beta_N^2/c_N\rfloor}||=0)
&=&\E[p_N^{T^N_1}]\ge\E[p_N^{\sum_{i=1}^nG^N_i}]\nonumber\\
&=&\left(\frac{p_N\beta_N^{-2}}{1-(1-\beta_N^{-2})p_N}\right)^n\nonumber\\
&=&\left(1-\beta_N^2+p_N^{-1}\beta_N^2\right)^{-n}\nonumber\\
&\ge&\left(1-\beta_N^2+(1-(1/4)^{N^\varepsilon})^{-1}\beta_N^2\right)^{-n}\nonumber
\end{eqnarray*}  
and this quantity converges to 1 when $N\to\infty$.}

Finally, let us  prove  \eqref{multinomialdist}. Let $t>0$ fixed, and suppose that $\nu^N$ converges to a measure $\nu$. By, equation \eqref{convweak} we have that $\lim_{N\rightarrow\infty}|\bar{Z}^N_{\lfloor t\beta_N^2/c_N\rfloor}|=N_t^K$. Then, observe that $\bar{Z}_{g}^N$ has a multinomial distribution with parameters $|\bar{Z}^N_{g}|$ and $\nu^N$. Thus, in distribution,
\begin{equation}\label{convergeciaZvector}
\lim_{N\rightarrow\infty}\bar{Z}^N_{\lfloor t\beta_N^2/c_N\rfloor}=V^{t,K}. 
\end{equation}
On the other hand, by \eqref{conv}, we have that 
\begin{equation*}
\lim_{N\rightarrow\infty}|\bar{B}^N_{\lfloor t\beta^2_N/c_N\rfloor}|=N_t^K.
\end{equation*}

For $t>0$ fixed, let us couple $\bar{Z}^N_{\lfloor t\beta^2_N/c_N\rfloor}$ and $\bar{B}^N_{\lfloor t\beta^2_N/c_N\rfloor}$ to show that the limit \eqref{convergeciaZvector} is the same for $\bar{B}^N_{\lfloor t\beta^2_N/c_N\rfloor}$. As we did before, the coupling consists in constructing the random variable $(\underline{R}^{N,1}_{\lfloor t\beta^2_N/c_N\rfloor},\dots,\underline{R}^{N,m_N}_{\lfloor t\beta^2_N/c_N\rfloor})$ as the optimal coupling of $(R^{N,1}_{\lfloor t\beta^2_N/c_N\rfloor},\dots,R^{N,m_N}_{\lfloor t\beta^2_N/c_N\rfloor})$, { which depends on the initial condition $\bar n\in [N]^{m_N}$}. The probability that the coupling is successful is

\begin{eqnarray*}
\varrho_N&:=&\inf_{\bar n \in [N]^{m_N}} \p_{\bar n}((\underline{R}^{N,1}_{\lfloor t\beta^2_N/c_N\rfloor},\dots,\underline{R}^{N,m_N}_{\lfloor t\beta^2_N/c_N\rfloor})=(R^{N,1}_{\lfloor t\beta^2_N/c_N\rfloor},\dots,R^{N,m_N}_{\lfloor t\beta^2_N/c_N\rfloor}))\\
&=&1-\sup_{\bar n \in [N]^{m_N}}\p_{\bar n}((\underline{R}^{N,1}_{\lfloor t\beta^2_N/c_N\rfloor},\dots,\underline{R}^{N,m_N}_{\lfloor t\beta^2_N/c_N\rfloor})\neq(R^{N,1}_{\lfloor t\beta^2_N/c_N\rfloor},\dots,R^{N,m_N}_{\lfloor t\beta^2_N/c_N\rfloor}))\\
&=&1-\sup_{\bar n \in [N]^{m_N}}\| \p_{\bar n}((R^{N,1}_{\lfloor t\beta^2_N/c_N\rfloor},\dots,R^{N,m_N}_{\lfloor t\beta^2_N/c_N\rfloor})=\cdot)-(\nu^N)^{\otimes m_N}(\cdot)\|_{TV}
\end{eqnarray*}
Take $\varepsilon>0$ such that $N^\varepsilon\tau_N c_N\rightarrow 0$, and observe that $\beta_N>0$. This implies that
\begin{equation}\label{gamma}
\p(\lfloor t\beta^2_N/c_N\rfloor<N^\varepsilon\tau_N )\rightarrow 0 \mbox{ as } N\rightarrow\infty.
\end{equation}
By, \eqref{coumpl} and \eqref{gamma}, we have that $\varrho_N\to 1$. 
This gives \eqref{multinomialdist}.
\end{proof}

\begin{remark}

Consider two processes, $\{\underline{ R}^N_g\}_{g\geq0}$ and $\{R^N_g\}_{g\geq0}$, the first one starting with one particle in stationarity and the second one starting with one particle in state one.  If we consider their Doebling coupling (which consists in letting them evolve according to their respective laws and in merging their paths when they meet for the first time, see \cite{Mixing}, Chapter 5), their coupling time, $T^N$, is less than two with probability 
$$\nu^N(1)+\sum_{i=1}^{m_N-1}\mu^N(i)\nu^N(i+1)\leq \nu^N(1)+\frac{1}{\beta_N}\sum_{i=1}^{m_N-1}\mu^N(i)=\frac{1}{\beta_N}(2-\mu^N(m_N)).$$
As the process $\{R^N_g\}_{g\geq0}$ visits the state one approximately every $\beta_N$ steps we conclude that $\mathbb{P}(T^N>N^\varepsilon \beta_N^2)\rightarrow 0$ when $N\rightarrow \infty$. Since $\tau_N\leq\inf\{t\geq0;\mathbb P(T^N>t)<1/4\}$, we obtain that
 $m_N\le\tau_N\leq {N^\varepsilon\beta^2_N}$ {for $N$ large enough. Note that  $m_N$ is always a lower bound for $\tau_N$ since the mixing time is at least the time for one particle starting at $m_N$ to reach the origin.} Then hypotheses of Theorem \ref{cv1KKL} can be relaxed to the following
 $$c_N/\beta_N^2 \rightarrow 0,\quad{m_N/(N^{\varepsilon} \beta^{2}_N)\to\delta<1,\quad N^{2\varepsilon }\beta^2_Nc_N\rightarrow 0},\quad (1/4)^{N^\varepsilon}\beta_N^2\rightarrow 0\quad\text{and}\quad d_N/(\beta_Nc_N)\rightarrow 0$$
 with the advantage that they are easier to verify.

\end{remark}

\begin{proof}[Proof of Theorem \ref{cv2KKL}]
The proof of \eqref{convXi} is similar to that of \eqref{conv} in Theorem \ref{cv1KKL}.
First observe that \eqref{cdcasoXi} implies that $c_N\to0$.
Assuming furthermore that $\beta_N\to\beta<\infty$, we get that
$c_N/\beta_N^2\to0$, $
(1/4)^{N^\varepsilon}\beta_N^2\rightarrow 0$ and $N^\varepsilon\tau_Nc_N\rightarrow 0$ (since the mixing time is of order 1 in this case).
These are the three first conditions of \eqref{cdcasoK}, necessary to mimic the proof of Theorem \ref{cv1KKL}.

 In the present case, let $I_i$ denote the indicator of the event that $L_1^{N,i}=L_1^{N,j}$ for some $j\in[i-1]$. Note that $\{C^N_{\lfloor t/c_N \rfloor}\}_{t\geq0}$ has generator
$$
\mathcal{C}^Nf(n)=c_N^{-1}\E[f(n-\sum_{i=1}^nI_i)-f(n)]
$$
which by hypothesis converges to the generator of the block counting process of a $\Xi$-coalescent. Finally note that, using the same notation, the generator of the artificial (in stationarity) block counting process $\{\bar Z_{\lfloor t/c_N\rfloor}^N\}_{t\geq0}$ is 

$$
\mathcal{C}^Nf(n)=c_N^{-1}\E[f(n-\sum_{i=1}^nI_i1_{\{ \underline R^{N,i}_1=1\}})-f(n)].
$$
As $1_{\{\underline{R}_1^{N,i}=1\}}$ is a Bernoulli random variable with parameter tending to $\beta^{-1}$ and independent of $I_i$, we conclude that
$$
\lim_{N\to\infty}\{|\bar B_{\lfloor t/c_N\rfloor}^N|\}_{t\geq0}= \{N_{t}^{\Xi^\beta}\}_{t\geq0}.
$$
The rest of the proof is identical to that of Theorem \ref{cv1KKL}. 
\end{proof}

{
\begin{remark}
The window process can also be defined in terms of a modification of the partition-valued ancestral process. This more formal (and more complicated) definition follows the historical approach of \cite{KKL, BGKS}.
Fix a generation $g_0$, and $S_{g_0} \subset\cup_{i=1}^{m_N}  V^N_{g_0+1-i}$.
 For $g\geq1$, consider the equivalence relation on $S_{g_0}$, that we denote by $\sim_g$, such that $u\sim_g v$ if and only if they have a common ancestor at a generation between $g_0-g+1$ and $g_0$. 
 Let $\pi_0=\pi_1$ be the trivial partition made of the isolated elements of $S_{g_0}$ (singletons) and let $\pi_g$ be the partition induced by $\sim_g$ in the sample $S_{g_0}$. Let $\mathcal B^N_g$ be the set composed by the closest ancestors, living at a generation $g_0-g'$ for some $g'\geq g$, of each of the blocks in  $\pi_g$. Then, for $1\leq i\leq m_N$, we define
$$
B_g^{N,i}:=|\mathcal B^N_g\cap V^N_{g_0-g+1-i}|.
$$
We illustrate this definition by the realization pictured in Figure \ref{FigAncestral}. 

In this case $\pi_0=\pi_1$ $=\pi_2=\{\{v_1\},\{v_2\},\{v_3\},\{v_4\},\{v_5\}\}$. 
Observe that even if some individuals reach their common ancestor at generation -2, they  remain isolated in $\pi_2$.
Then $\pi_3=\{\{v_1,v_2\},\{v_3,v_4\},\{v_5\}\}$, $\pi_4=\{\{v_1,v_2,v_3,v_4\},\{v_5\}\}$, $\pi_5=\{\{v_1,v_2,v_3,v_4,v_5\}\}$.
Hence, $\mathcal B^8_0=\{v_1,v_2,v_3,v_4,v_5\}$
and, when moving some generations backwards, we get $\mathcal B^8_1=\{v_5,(-1,5),(-2,4),(-2,7),(-2,7)\}$ and $\mathcal B^8_2=\{(-2,3),(-2,4),(-2,4),(-2,7),(-2,7)\}$. Observe that in $\mathcal B^8_2$ the ancestors $(-2,4)$ and $(-2,7)$ appear twice. 
Also  $\mathcal B^8_3=\{(-3,7),(-3,7),(-4,6)\}$,  $\mathcal B^8_4=\{(-4,6), (-4,6)\}$,  $\mathcal B^8_5=\{(-5,5)\}$. 
\\
Finally, note that the coupled variables $Y^{N,j}_g=(R^{N,j}_g,L^{N,j}_g)$ define the process that models the distance between $g$ and the position of the ancestor of the $j$-th block induced by $\sim_g$.
\end{remark}}

\section{The forward frequency process}\label{ffp}

In this section we introduce the forward frequency process associated to the  seed bank graph, we establish duality results with the ancestral and window processes introduced in the previous section, and we establish some scaling limits results thanks to these tools.
\begin{definition}[The frequency process]\label{FreqProc}
Fix a generation $g_0$ and an initial sample $S_{g_0}\subset\cup_{i=1}^{m_N}  V^N_{g_0+1-i}$, that we call the type $A$ individuals. Hence, $\cup_{i=1}^{m_N}  V^N_{g_0+1-i}\backslash S_{g_0}$  is the set of type $a$ individuals. 
 For $g\geq0$, set (omitting again the dependence to $S_{g_0}$) 
$$
X^{N,i}_g=\frac{1}{N}|\{v\in V^N_{g_0+g+1-i}: v\text{ is not connected to }u\text{ for some }u\in \cup_{i=1}^{m_N}  V^N_{g_0+1-i}\backslash S_{g_0} \}|.
$$
Then, define the process of the neutral frequency of type $A$ individuals $\{\bar X^{N}_g\}_{g\geq0}$, by
$$
\bar X^{N}_g=(X_g^{N,1},\dots,X_g^{N,m_N}).
$$
Set a vector $\bar x=(x_1, \dots, x_{m_N})\in([N]/N)^{m_N}$. In the sequel, we suppose that the forward frequency process starts from a fraction $x_1$ of generation 0, a fraction $x_2$ of generation $-1$, and so on... We denote this sample by $S_{0}(\bar x)=\cup_{i=1}^{m_N}\cup_{k=1}^{ x_iN}\{(1-i,k)\}$ and we denote the law of the frequency process starting from this configuration  by ${\bf P}_{\bar x}$.
\end{definition}
\begin{figure}[H]
\centering
\begin{tikzpicture}
\draw[->,thick] (-0.8,2) to [out=-30, in=190](4.8,2);
\draw[->,thick] (-0.8,3) -- (0.8,3);
\draw[->,thick] (-0.8,8) to [out=60, in=160](2.8,8);
\draw[->,thick] (-0.8,1) to (0.8,2);
\draw[->,thick] (-0.8,4) -- (0.8,4.9);
\draw[->,thick] (-0.8,5) -- (2.8,6);
\draw[->,thick] (-0.8,5) -- (2.8,6);
\draw[->,thick] (-0.8,6) -- (.8,7);
\draw[->,thick] (-0.8,7) -- (.8,8);

\draw[->,thick] (1,1) -- (2.8,1);
\draw[->,thick] (1,4) -- (4.8,3);
\draw[->,thick] (1.2,5) -- (2.8,5);
\draw[->,thick] (1.2,6) -- (2.8,7);
\draw[->,thick] (1.2,8)to [out=30, in=160] (4.8,8);
\draw[->,thick] (1.2,3) -- (2.8,4);
\draw[->,thick] (1.2,3) -- (2.8,3);
\draw[->,thick] (1.2,2) -- (2.8,2);

\draw[->,thick] (3,1) -- (4.8,1);
\draw[->,thick] (3.2,2)to [out=30, in=160] (6.8,2);
\draw[->,thick] (3.2,3)to [out=30, in=200] (6.8,4);
\draw[->,thick] (3.2,4) -- (4.8,4);
\draw[->,thick] (3.2,5) -- (4.8,5);
\draw[->,thick] (3.2,5) -- (4.8,6);
\draw[->,thick] (3.2,6) -- (4.8,7);
\draw[->,thick] (3.2,7) to [out=30, in=200] (8.8,8);
\draw[->,thick] (3.2,8) to [out=30, in=150] (6.8,8);

\draw[->,thick] (5,1) -- (8.8,2);
\draw[->,thick] (5.2,2) -- (6.8,1);
\draw[->,thick] (5,3) -- (6.8,3);
\draw[->,thick] (5,3) -- (6.8,4.9);
\draw[->,thick] (5.2,4)to [out=30, in=150] (8.8,4);
\draw[->,thick] (5.2,5) -- (6.8,5.9);
\draw[->,thick] (5.2,6.1) -- (6.8,7);
\draw[->,thick] (5.2,8)to [out=30, in=200] (9,8.5);

\draw[->,thick] (7.2,1) -- (8.8,1);
\draw[->,thick] (7,3) -- (8.8,3);
\draw[->,thick] (7,5) -- (8.8,5);
\draw[->,thick] (7.2,6) -- (8.8,6);
\draw[->,thick] (7.2,8) -- (8.8,7);

\draw[fill=gray] (-1,1) circle(0.2); \node at (-1.5,1) {$v_1$};
\draw[fill=gray] (-1,2) circle(0.2);\node at (-1.5,2) {$v_2$};
\draw[fill=gray] (-1,3) circle(0.2);\node at (-1.5,3) {$v_3$};
\draw[fill=gray] (-1,4) circle(0.2); \node at (-1.5,4) {$v_4$};
\draw (-1,5) circle(0.2);
\draw (-1,6) circle(0.2); 
\draw (-1,7) circle(0.2);
\draw (-1,8) circle(0.2); 

\draw[fill=gray] (1,1) circle(0.2);\node at (.5,1) {$v_5$};
\draw[fill=gray] (1,2) circle(0.2);\node at (.5,2.2) {$v_6$};
\draw[fill=gray] (1,3) circle(0.2);\node at (.5,3.2) {$v_7$};
\draw[fill=gray]  (1,4) circle(0.2); \node at (0.5,4) {$v_8$};
\draw[fill=lightgray]  (1,5) circle(0.2);
\draw (1,6) circle(0.2); 
\draw (1,7) circle(0.2);
\draw (1,8) circle(0.2); 

\draw[fill=lightgray] (3,1) circle(0.2);
\draw[fill=lightgray]  (3,2) circle(0.2);
\draw[fill=lightgray]  (3,3) circle(0.2);
\draw[fill=lightgray]  (3,4) circle(0.2); 
\draw[fill=lightgray]  (3,5) circle(0.2);
\draw (3,6) circle(0.2); 
\draw (3,7) circle(0.2);
\draw (3,8) circle(0.2); 

\draw[fill=lightgray]  (5,1) circle(0.2);
\draw[fill=lightgray] (5,2) circle(0.2);
\draw[fill=lightgray] (5,3) circle(0.2);
\draw[fill=lightgray] (5,4) circle(0.2); 
\draw[fill=lightgray] (5,5) circle(0.2);
\draw[fill=lightgray] (5,6) circle(0.2);
\draw (5,7) circle(0.2);
\draw (5,8) circle(0.2);
 
\draw[fill=lightgray] (7,1) circle(0.2);
\draw[fill=lightgray] (7,2) circle(0.2);
\draw[fill=lightgray] (7,3) circle(0.2);
\draw[fill=lightgray] (7,4) circle(0.2); 
\draw[fill=lightgray]  (7,5) circle(0.2);
\draw[fill=lightgray] (7,6) circle(0.2);
\draw[fill=lightgray] (7,7) circle(0.2);
\draw (7,8) circle(0.2);
 
\draw[fill=lightgray] (9,1) circle(0.2); 
\draw[fill=lightgray] (9,2) circle(0.2);
\draw[fill=lightgray] (9,3) circle(0.2);
\draw[fill=lightgray] (9,4) circle(0.2); 
\draw[fill=lightgray]  (9,5) circle(0.2);
\draw[fill=lightgray] (9,6) circle(0.2);
\draw (9,7) circle(0.2);
\draw (9,8) circle(0.2); 
\node at (9,0.5) {4};
\node at (7,0.5) {3};
\node at (5,0.5) {2};
\node at (3,0.5) {1};
\node at (1,0.5) {0};
\node at (-1,0.5) {-1};
\node at (11.5,7.5) {$\bar{X}^8_0=\left(  \dfrac{5}{8},\dfrac{4}{8},0\right) $};
\node at (11.5,6) {$\bar{X}^8_1=\left(  \dfrac{5}{8},\dfrac{5}{8},\dfrac{4}{8} \right)$};
\node at (11.5,4.5) {$\bar{X}^8_2=\left(  \dfrac{6}{8},\dfrac{5}{8}, \dfrac{5}{8}\right)$};
\node at (11.5,3) {$\bar{X}^8_3=\left(  \dfrac{7}{8},\dfrac{6}{8}, \dfrac{5}{8}\right)$};
\node at (11.5,1.5) {$\bar{X}^8_4=\left(  \dfrac{6}{8},\dfrac{7}{8}, \dfrac{6}{8}\right)$};
\end{tikzpicture}
\caption{In this case $N=8$, $m_N=3$ and $\bar{x}=\left( \frac{4}{8},\frac{4}{8},0\right)$. The gray circles represent the members of $S_0(\bar{x})=\{v_1,v_2,v_3,v_4,v_5,v_6,v_7, v_8\}$ where, for example, $v_3=(-1,3)$ and $v_6=(0,2)$. The light gray circles represent the sample's offspring. It is useful to observe that $X_g^{N,i}=X_{g+1}^{N,i+1}$ for $i<m_N$.}
\label{Fig2}
\end{figure}
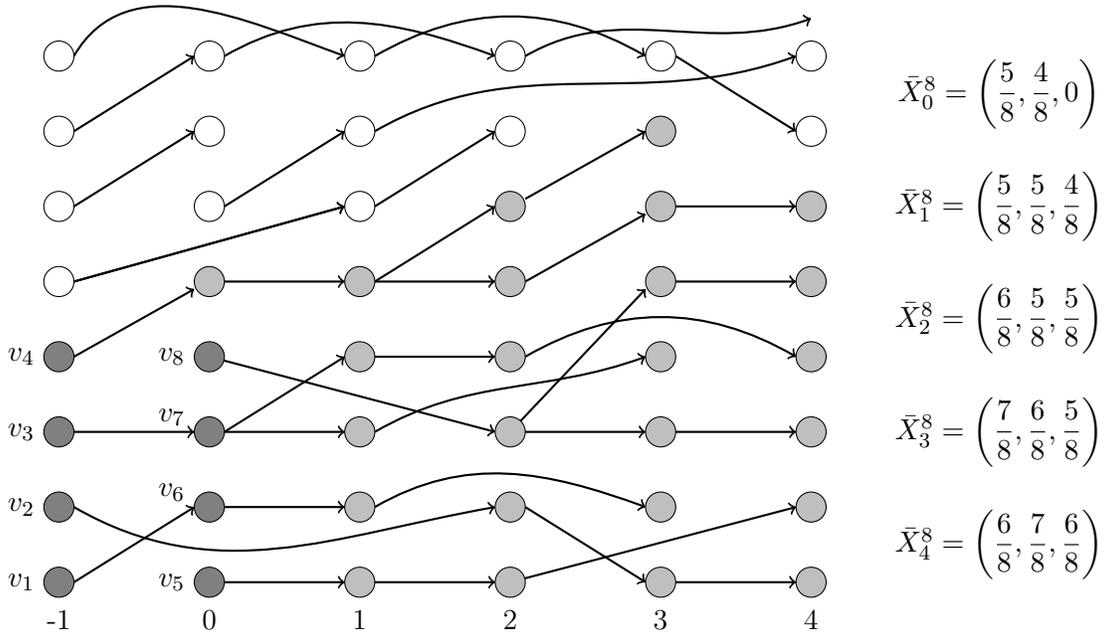
Again for simplicity, we suppose that $\sup\{i\ge1:x_i>0\}$ does not depend on $N$.
\begin{proposition}\label{samplingdual}
Fix the parameters $N$, $\mu^N$ {and $\mathcal W^N$} of the seed bank di-graph. 
The processes $\{\bar X^{N}_g\}_{g\geq0}$ and $\{\bar A^{N}_g\}_{g\geq0}$ are sampling duals: for every $g\geq0$, we have $ {\bf E}_{\bar x}[h^0(\bar n, \bar X^{N}_g)]=\E_{\bar n}[h^0(\bar A_g^{N},\bar x)]$ where $h^0(\bar n,\bar x):=\p_{\bar n}(\mathcal A^N_{1}\subset S_0({\bar x}))$.
\end{proposition}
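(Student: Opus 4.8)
I would realise the whole construction on one probability space: the random di-graph $G^N$ (all the weights $\bar W^N_g$, the delays $J^N_v$ and the mother-labels $U^N_v$) together with the independent uniform-with-repetition samplings. Both the forward process $\{\bar X^N_g\}_{g\ge0}$ started from $S_0(\bar x)$ and the ancestral process $\{\bar A^N_g\}_{g\ge0}$ started from $S_0(\bar n)$ are then functionals of this single object, and the aim is to exhibit the two sides of the identity as the probability of one and the same event read off $G^N$: that the ancestral lineages of a uniformly sampled $\bar n$-profile of individuals, traced backwards, reach $S_0(\bar x)$. The function $h^0$ is there to encode exactly one backward step, which is the correction needed because, owing to the seed bank, the forward frequency profile $\bar X^N_g$ at one ``window'' does not by itself determine the next one.

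The engine of the proof is the ``read it both ways'' observation on $G^N$. Colour type $A$ the vertices of $S_0(\bar x)$ and type $a$ the remaining vertices of the initial window; since every vertex of $G^N$ has a unique mother, the type of an individual propagates forward along the genealogy, so that an individual $v$ in a later window is of type $A$ if and only if the most recent ancestor of $v$ lying in the initial window belongs to $S_0(\bar x)$. Hence, for a uniform $\bar n$-profile sample $\mathbf S$ taken at the window one level above the initial one, ``every member of $\mathbf S$ is of type $A$'' is precisely the event $\{\mathcal A^N_1\subset S_0(\bar x)\}$ for $\mathbf S$; averaging over the colouring, and using the exchangeability of $\mathcal W^N$ and the i.i.d.\ structure of the $J^N_v$ to replace the genuinely random type-$A$ set (of profile $\bar X^N_g$) by the canonical set $S_0(\bar X^N_g)$ of the same profile, one obtains $h^0(\bar n,\bar X^N_g)=\p(\mathbf S\text{ entirely of type }A\mid \bar X^N_g)$, that is, $h^0$ is exactly ``one forward step'' of the plain sampling function $\bar y\mapsto\prod_i y_i^{n_i}$.

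With this identity in hand the proof finishes in either of two equivalent ways. Directly: ${\bf E}_{\bar x}[h^0(\bar n,\bar X^N_g)]=\p(\mathbf S\text{ entirely of type }A)=\p(\text{the lineages of }\mathbf S\text{ reach }S_0(\bar x))$; tracing these lineages, the first $g$ ancestral steps produce, by definition, the configuration $\bar A^N_g$, and the residual step is exactly what $h^0(\bar A^N_g,\bar x)$ computes, so that averaging yields $\E_{\bar n}[h^0(\bar A^N_g,\bar x)]$. Or by induction on $g$: the case $g=0$ is immediate from $\bar X^N_0\equiv\bar x$; for the inductive step one conditions the left-hand side on the forward filtration up to time $g$, applies the one-step identity above (translated to window $g$, using the translation invariance in the generation coordinate of the law of $G^N$) to obtain $\sum_{\bar m}Q(\bar n,\bar m)\,{\bf E}_{\bar x}[h^0(\bar m,\bar X^N_g)]$ with $Q$ the one-step transition kernel of the ancestral process, and then collapses the sum using the induction hypothesis together with the Markov property of $\{\bar A^N_g\}_{g\ge0}$ recorded after Definition \ref{AncProc}. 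Note that this second route needs only the Markov property of the ancestral process, not of the frequency process.

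I expect the main obstacle to be bookkeeping rather than ideas, specifically the accounting forced by the $m_N$-generation window. The delicate points are: (i) making the exchangeability substitutions rigorous, that is, that conditionally on $\bar X^N_g$ the forward type-$A$ set may be swapped for the canonical set $S_0(\cdot)$ inside $h^0$, and that the ``fresh'' backward step built into the definition of $h^0$ acts like a genuine step of $G^N$; here one uses that the event ``$v$ is of type $A$'' for $v$ at generation $h$ is independent of the generation-$h$ weights $\bar W^N_h$; (ii) reconciling the with-repetition samplings with the fact that $\mathcal A^N$ records distinct ancestors; and (iii) the generational off-by-one bookkeeping at the boundary of each window (the bottom generation $g+1-m_N$), which must be handled so that the single step built into $h^0$ composes correctly with the $g$-step ancestral process. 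Once these are pinned down the identity falls out of the grand coupling.
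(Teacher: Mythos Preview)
Your proposal is correct and follows essentially the same approach as the paper. The paper's proof simply introduces the joint quantity $h^g(\bar n,\bar x):=\p_{\bar n}(\mathcal A^N_{g+1}\subset S_0(\bar x))$ (with the ancestral sample placed at generation $g+1$) and computes it two ways, conditioning once on $\bar X^N_g$ and once on $\bar A^N_g$; this is exactly your ``direct'' route, just stated without the grand-coupling vocabulary and without dwelling on the bookkeeping issues (i)--(iii) you raise, which the paper leaves implicit.
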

\begin{proof}
Suppose that the ancestral process starts at generation $g+1$ from the sample $S_{g+1}(\bar n)$, as in Definition \ref{AncProc}.
Also suppose that the frequency process starts at generation 0 from the sample $S_{0}(\bar x)$, as in Definition \ref{FreqProc}. 
Introduce the functions
\begin{equation}
h^g(\bar n,\bar x):=\p_{\bar n}(\mathcal A^N_{g+1}\subset S_0({\bar x})).
\end{equation}
We can write $h^g(\bar n,\bar x)$ in terms of the forward process by conditioning as follows.
\begin{eqnarray*}
h^g(\bar n,\bar x)&=&\sum_{\bar y\in([N]/N)^{m_N}}h^0(\bar n,\bar y){\bf P}_{\bar x}(\bar X_g^{N}=\bar y)\\&=&{\bf E}_{\bar x}[h^0(\bar n, \bar X^{N}_g)].
\end{eqnarray*}
At this point it should be clear that we can also condition according to the backward process.
\begin{eqnarray*}
h^g(\bar n,\bar x)&=&\sum_{\bar m\in [N]^{m_N}}h^0(\bar m, \bar x)\p_{\bar n}(\bar A_g^{N}=\bar m)\\&=&\E_{\bar n}[h^0(\bar A_g^{N},\bar x)].
\end{eqnarray*}
This implies that for all $\bar x\in ([N]/N)^{m_N}$, $\bar n\in [N]^{m_N}$ and $g\geq1$,
$$
\E_{\bar n}[h^0(\bar A_g^{N},\bar x)]={\bf E}_{\bar x}[h^0(\bar n,\bar X^{N}_g)].
$$
\end{proof}


The sampling duality provides a relation between the forward and the ancestral processes. However, in this case the relation is not a moment duality. 
It is possible to write explicitly this sampling duality and to use it, but we will rather use the less natural window process which has the advantage of being precisely the moment dual of the forward process.

\begin{proposition}\label{seedduality}
Fix the parameters $N$, $\mu^N$ {and $\mathcal W^N$} of the seed bank di-graph. 
The window process $\{\bar B^N_g\}_{g\geq0}$ and the forward frequency process $\{\bar X^N_g\}_{g\geq0}$ are moment duals.
\end{proposition}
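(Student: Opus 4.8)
The plan is to establish the moment duality with duality function
\[
H(\bar n,\bar x):=\prod_{i=1}^{m_N}x_i^{\,n_i},\qquad \bar n\in[N]^{m_N},\ \bar x\in([N]/N)^{m_N},
\]
that is, to prove ${\bf E}_{\bar x}\big[H(\bar n,\bar X^N_g)\big]=\E_{\bar n}\big[H(\bar B^N_g,\bar x)\big]$ for all $g\ge0$; when $m_N=1$ this specializes to the familiar identity ${\bf E}_x[(X^N_g)^n]=\E_n[x^{B^N_g}]$ between a Cannings frequency chain and its block counting process. Since the forward frequency process need not be Markov, I will not argue by a generator/semigroup duality. Instead I fix $\bar x$, set $u_g(\bar n):={\bf E}_{\bar x}[H(\bar n,\bar X^N_g)]$, and prove directly that $u_0=H(\cdot,\bar x)$ and that $g\mapsto u_g$ satisfies the backward recursion $u_{g+1}(\bar n)=\E_{\bar n}\big[u_g(\bar B^N_1)\big]$ driven by the window chain; iterating this and using that $\{\bar B^N_g\}_{g\ge0}$ \emph{is} Markov — via the representation $\bar B^N_1=(n_2,\dots,n_{m_N},0)+M(C^N(n_1))$ recalled before Proposition~\ref{ParticleEasy} — then gives the statement.

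The base case is immediate from $X^{N,i}_0=x_i$. Conceptually, $H(\bar n,\bar x)$ is the probability that, for each $i$, $n_i$ individuals sampled with replacement from the $i$-th generation slot of the configuration are all of type $A$; correspondingly, $H(\bar n,\bar X^N_{g+1})$ is the probability that the analogous sample drawn from generations $g+1,g,\dots,g+2-m_N$ is entirely of type $A$. The recursion is obtained by replacing each of the $n_1$ individuals drawn at generation $g+1$ by its parent: an individual has type $A$ iff its parent does, so the event ``monochromatic of type $A$'' is unchanged, and what remains is a (non-uniform) sample of generations $g,\dots,g+1-m_N$ — the $n_1$ parents are dispersed across these generations by i.i.d.\ $\mu^N$-jumps and, within a generation, grouped according to the Cannings reproduction weights, while the originally drawn $n_2,\dots,n_{m_N}$ individuals are carried along uniformly.

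The delicate step is to show that the probability that this traced sample is monochromatic of type $A$ equals $\E_{\bar n}\big[u_g(\bar B^N_1)\big]$, i.e.\ that for this purpose the traced sample behaves exactly like a uniform with-replacement sample of the generation-$g$ configuration with generation profile $\bar B^N_1=(n_2,\dots,n_{m_N},0)+M(C^N(n_1))$. This uses the exact finite-$N$ Cannings moment identity ${\bf E}_y[(X^N_1)^k]=\E[y^{C^N(k)}]$, applied one generation at a time inside the window, together with the multinomial generating-function identity $(\sum_j\mu^N(j)y_j)^k=\E[\prod_j y_j^{M(k)_j}]$; the point is that the $\mathcal O(1/N)$ corrections coming from sampling-with-replacement collisions cancel exactly against the $\mathcal O(c_N)$ corrections coming from Cannings coalescences, which is what makes the (a priori different) laws of the traced profile and of $\bar B^N_1$ interchangeable under $u_g$. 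Granting this, $u_{g+1}(\bar n)=\E_{\bar n}[u_g(\bar B^N_1)]$ follows, and then so does the duality.

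The main obstacle is exactly this last matching, which has to be exact and not merely asymptotic; this is where the defining quirk of the window process becomes essential. Carrying ancestral lineages \emph{with multiplicity} and acknowledging a coalescence only when two lineages reach the first coordinate — instead of at the generation where they meet, as in the ancestral process $\{\bar A^N_g\}$ — is precisely the convention under which ``Cannings-merge, then $\mu^N$-jump'' is the one-step dual of the frequency update; the ``resolve immediately'' rule gives only the weaker sampling duality of Proposition~\ref{samplingdual}. A routine secondary point, handled as in the proof of Proposition~\ref{samplingdual}, is the alignment of the generation index of the forward sample with the position of the window front; note also that repetitions inside these multiset samples are harmless, a multiset being monochromatic of type $A$ precisely when each of its elements is.
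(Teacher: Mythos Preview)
Your inductive route is genuinely different from the paper's. You fix $\bar x$, set $u_g(\bar n)={\bf E}_{\bar x}[H(\bar n,\bar X^N_g)]$, and aim for the one-step recursion $u_{g+1}(\bar n)=\E_{\bar n}[u_g(\bar B^N_1)]$, then iterate via the Markov property of $\{\bar B^N_g\}$. The paper instead proves the duality in one shot by a graphical/sampling argument: it observes that the sampling function $\tilde h(\bar n,\bar x):=\p(S_0(\bar n)\subset S_0(\bar x))$ equals $H(\bar n,\bar x)$ exactly (because $S_0(\bar n)$ is a uniform with-replacement sample), and then, exactly as in Proposition~\ref{samplingdual}, identifies both ${\bf E}_{\bar x}[H(\bar n,\bar X^N_g)]$ and $\E_{\bar n}[H(\bar B^N_g,\bar x)]$ with the single probability $\p_{\bar n}(\mathcal B^N_g\subset S_0(\bar x))$, by conditioning on $\bar X^N_g$ and on $\bar B^N_g$ respectively. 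No induction, no one-step identity, and no ``cancellation of $\mathcal O(1/N)$ against $\mathcal O(c_N)$'' is needed: the whole point of building both processes on the same random di-graph is that the duality is the statement that a single event can be read forwards or backwards.

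Your approach can in principle be made rigorous, but the ``delicate step'' is not closed by your sketch. After one backward step the traced sample lives at generations $g,\dots,g+1-m_N$, and its labels are drawn through the reproduction weights $\bar W^N_{g+1-j}$; for $j\ge2$ these same weights also enter $\bar X^N_g$ (via the coloring at the more recent generations inside the window), so the traced sample is \emph{not} a priori independent of $\bar X^N_g$, and you cannot simply replace it by a fresh uniform-with-replacement sample of profile $\bar B^N_1$ and invoke $u_g$. Disentangling this requires an exchangeability argument that is essentially equivalent to --- but more laborious than --- the paper's endpoint argument, where conditioning on $\bar B^N_g$ carries the multiset $\mathcal B^N_g$ all the way back to the deterministic coloring $S_0(\bar x)$ and the dependence issue never arises.
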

\begin{proof}
 We will construct a sampling duality that is exactly moment duality i.e duality with respect to the function $H:\N^{m_N}\times [0,1]^{m_N}\mapsto [0,1]$, 
\begin{equation}
H(\bar n,\bar x)=\prod_{i=1}^{m_N} x_i^{n_i}
\end{equation}
 Fix $\bar n\in [N]^{m_N}$ and $\bar x\in ([N]/N)^{m_N}$, and set the samples $S_{0}(\bar n)$ and $S_0(\bar x)$ as in Definition \ref{AncProc}  and Definition \ref{FreqProc} (with $g_0=0$). 
 Observe that $ S_0(\bar n)=\{v=(1-i,U_{j,i}), i=1,\ldots,m_N,j=1,\ldots,n_i\}$ where the $U_{j,i}$'s form a family of independent uniformly distributed random variables with values in $[N]$. 
 Then, we have
 \begin{align*}
\tilde h(\bar n,\bar x):=\p( S_{0}(\bar n)\subset S_0({\bar x}))=\prod_{i=1}^{m_N}\prod_{j=1}^{n_i}\p((1-i,U_{j,i})\in S_0({\bar x}))=\prod_{i=1}^{m_N}\prod_{j=1}^{n_i}x_i=H(\bar n,\bar x).
\end{align*}

Now we prove sampling duality with respect to this function.  
As in the proof of Proposition \ref{samplingdual}, condition on $\bar X^N_g$ to obtain that $\p_{\bar n}(\mathcal B^N_g\subset S_0({\bar x}))={\bf E}{_{\bar{x}}}[\tilde h(\bar n, \bar X^N_g )]$ and condition on $\bar B^N_g$ to obtain that $\p_{\bar{n}}(\mathcal B^N_g{\subset} S_0({\bar x}))=\E{_{\bar{n}}}[\tilde h(\bar B^N_g, \bar x )]$.
\end{proof}

Now we are able to state an analogue of Theorem \ref{cv1KKL} for the dual process, using the moment duality.
\begin{theorem}[Convergence of the forward frequency process]\label{Forward}
Assume that $m_N\leq m<\infty$ for all $N\in\N$. Fix $\{\mathcal{W}^N\}_{N\geq1}$ and $\{\mu^N\}_{N\geq1}$ (and the associated stationary distribution $\nu^N$) such that either the assumptions of Theorem \ref{cv1KKL} hold or the assumptions of Theorem \ref{cv2KKL} hold. { Suppose that $\nu^N$ converges to a measure $\nu$ on $[m]$ as $N\rightarrow\infty$}.
Let $\{\bar X^N\}_{N\geq1}$ be the sequence of frequency processes with parameters $N$, $\mathcal{W}^N$ and $\mu^N$ and starting condition $\bar X^N_{0}=(\lfloor Nx_1\rfloor/N,\dots,\lfloor Nx_{m}\rfloor/N)$ for some $\bar x\in[0,1]^{m}$.\\
i) Under the assumptions of Theorem \ref{cv1KKL},
$$
\lim_{N\rightarrow \infty}\{\bar X_{\lfloor t\beta_N^2 /c_N\rfloor}^N\}_{t\geq0}= \{\bar X_{t}\}_{t\geq0}
$$
in the finite dimension sense,
where $\bar X_t$ is a vector with $m$ identical coordinates $X_t$ such that 
 $X_0=x_0=\sum_{i=1}^m \nu(i)x_i$ a.s.,  and $\{X_t\}_{t\geq0}$ is the Wright-Fisher diffusion (dual of $\{N_{t}^{K}\}_{t\geq0}$).\\
ii) Under the assumptions of Theorem \ref{cv2KKL}, 
$$
\lim_{N\rightarrow \infty}\{\bar X_{\lfloor t /c_N\rfloor}^N\}_{t\geq0}= \{\bar X_{t}\}_{t\geq0}
$$
in the finite dimension sense,
where $\bar X_t$ is a vector with $m$ identical coordinates $X_t$ such that 
 $X_0=x_0=\sum_{i=1}^m \nu(i)x_i$ a.s.,  and 
$\{X_t\}_{t\geq0}$ is the moment dual of $\{N_{t}^{\Xi^\beta}\}_{t\geq0}$.
\end{theorem}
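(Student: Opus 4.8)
The plan is to derive the convergence of the frequency process from the convergence of the window process (Theorems \ref{cv1KKL} and \ref{cv2KKL}) via the moment duality established in Proposition \ref{seedduality}, exactly the kind of argument that transfers a coalescent limit to its forward dual. First I would record the moment duality in its rescaled form: for every $N$, every $\bar n\in[N]^{m_N}$ and every $g\ge0$,
\begin{equation*}
{\bf E}_{\bar x}\!\left[\prod_{i=1}^m (X^{N,i}_g)^{n_i}\right]=\E_{\bar n}\!\left[\prod_{i=1}^m x_i^{B^{N,i}_g}\right].
\end{equation*}
The idea is that, under the relevant time change $g=\lfloor t\beta_N^2/c_N\rfloor$ (case i) or $g=\lfloor t/c_N\rfloor$ (case ii), the right-hand side is controlled by Theorem \ref{cv1KKL} (resp.\ \ref{cv2KKL}): not only does $|\bar B^N_{\cdot}|$ converge to the block counting process $\{N^K_t\}$ (resp.\ $\{N^{\Xi^\beta}_t\}$), but by \eqref{multinomialdist} (resp.\ \eqref{multinomialdistXi}) the full vector $\bar B^N$ at a fixed time converges to a multinomial vector $V^{t}$ with parameters $N_t$ and $\nu$. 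Then $\prod_i x_i^{B^{N,i}_g}$ converges in distribution (and, being bounded by $1$, in expectation) to $\E[\,\prod_i x_i^{V^{t,i}}\mid N_t\,]=\E[(\sum_i\nu(i)x_i)^{N_t}]=\E[x_0^{N_t}]$. So the mixed moments of $\bar X^N_g$ converge to $\E[x_0^{N_t}]=\E[\prod_i x_0^{n_i}\,|\,\text{using }n=\sum n_i]$, i.e.\ to the mixed moments of the vector $(X_t,\dots,X_t)$ where $\{X_t\}$ is the moment dual of the limiting block counting process started at $x_0$.

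Second, I would promote this moment convergence to convergence in distribution of $\bar X^N_g$ (for fixed $t$) to $(X_t,\dots,X_t)$. Since each coordinate lives in $[0,1]$, the vector lives in the compact $[0,1]^m$, so the sequence of laws is tight; and the collection of monomials $\prod_i x_i^{n_i}$, together with constants, is an algebra separating points of $[0,1]^m$, hence dense in $C([0,1]^m)$ by Stone--Weierstrass. Convergence of all mixed moments therefore pins down the limit law uniquely, giving $\bar X^N_{\lfloor t\beta_N^2/c_N\rfloor}\Rightarrow(X_t,\dots,X_t)$ (resp.\ with time change $t/c_N$). The identification of the limit as the Wright--Fisher diffusion in case i) is the classical moment duality between $\{N^K_t\}$ and the Wright--Fisher diffusion; in case ii) it is the (assumed/known) moment duality between $\{N^{\Xi^\beta}_t\}$ and its forward dual. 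The initial condition is $X_0=x_0=\sum_i\nu(i)x_i$, which appears because $\E_{\bar n}[\prod_i x_i^{B^{N,i}_0}]\to\E[\prod_i x_i^{V^{0,i}}]=x_0^{\,n}$ as the starting vector $\bar B^N_0=\bar n$ with $n=\sum n_i$ lineages gets ``projected'' through the stationary law $\nu$; alternatively one sees directly that $X^{N,i}_0\to x_0$ is forced because any fixed lineage is, after the burn-in, at a $\nu$-distributed distance.

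Third, I would upgrade the one-time-marginal statement to finite-dimensional distributions. The clean way is to use the Markov property of the limit $\{X_t\}$ together with moment duality at several times: for $0\le t_1<\dots<t_k$ one shows by induction that the joint moments ${\bf E}_{\bar x}[\prod_{j}\prod_i (X^{N,i}_{g_j})^{n^{(j)}_i}]$ converge, exploiting that at each step the window process, restarted from its current (multinomial-type) configuration, again falls into the scope of Theorem \ref{cv1KKL}/\ref{cv2KKL} because $\nu^N$ is the stationary law and the hypotheses are preserved. Since mixed moments at finitely many times separate laws on $([0,1]^m)^k$ (again Stone--Weierstrass plus compactness), this yields convergence of the finite-dimensional distributions to those of $\{(X_t,\dots,X_t)\}_{t\ge0}$, as claimed. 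I expect the main obstacle to be this last step: the window process restarted at an arbitrary state need not be in stationarity, so one must carefully invoke the coupling to the ``always stationary'' process $\bar Z^N$ from the proof of Theorem \ref{cv1KKL} to argue that the burn-in between consecutive macroscopic times $t_{j}$ and $t_{j+1}$ is negligible (the inter-jump times are of order $\beta_N^2/c_N$, while equilibration takes $O(N^\varepsilon\tau_N)\ll\beta_N^2/c_N$ steps), so that the increments behave as if started from the projected configuration. A secondary technical point is justifying the exchange of limit and expectation, which is immediate here only because all the relevant random variables are bounded by $1$; no uniform integrability argument is needed.
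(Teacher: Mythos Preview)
Your proposal is correct and follows the paper's approach: moment duality (Proposition \ref{seedduality}) combined with the multinomial convergence \eqref{multinomialdist}/\eqref{multinomialdistXi} to show that mixed moments converge to $\E[x_0^{N_t}]$, then the moment problem on the compact $[0,1]^m$. The paper organizes the computation slightly differently --- it first handles single-coordinate moments and then separately shows $\lim_N {\bf E}_{\bar x}[(X^{N,i}-X^{N,j})^2]=0$ to force the coordinates to coincide, rather than treating all mixed moments at once as you do --- and it leaves the multi-time finite-dimensional step implicit, so your discussion of that step (and of the burn-in obstacle) is in fact more explicit than the paper's.
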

\begin{remark}
{It is surprising at first that the components of the limit are identical. However, this becomes intuitive when observing that the support of  $\mu^N$  vanishes on the limiting time scale.  Theorem  \ref{Forward} uses the more restrictive assumption that the support for $\{\mu^N\}_{N\geq1}$  is bounded. This seems to be more than a technical assumption, because it is hard to believe that an asymptotically infinite dimensional sequence of processes would converge to an infinite dimensional process with all entries being equal (even if  the support of  $\mu^N$  vanishes on the limiting time scale). A natural question is, what is the limit in this more general scenario? An interesting framework to study this question, in which there are results in the literature, is taking $\mu^N:=\mu$ such that $\mu(\{j:j>i\})\sim i^{-\alpha}$ see \cite{BGKS}. If $\alpha>1$, Theorem \ref{Forward} should essentially apply, but the convergence should hold only for finitely (but arbitrarily big) many dimensions simultaneously. If $\alpha\in(0,1/2)$ the limit, if it exists, should be related to fractional Brownian motion (see \cite{HS} and \cite{WI}). The frequency process in the case $\alpha\in(0,1/2)$ is believed to exist and was named by Blath and Span\`o the Fractional Wright Fisher Diffusion. If one takes $\mu^N=1/N\delta_1+Geo(1/N)$ a modification of Theorem \ref{Forward} leads to a criteria for convergence to $\Xi$-seed bank diffusions, this is current work of the authors. The seed bank diffusion was introduced in \cite{BGKW} and can be thought as a delayed stochastic differential equation \cite{BBGW19} (see also \cite{BHN}).}
\end{remark}
\begin{proof}
We only write the details for case {\it i)}, case {\it ii)} follows identically.
 The proof is a consequence of Proposition \ref{seedduality}, Theorem \ref{cv1KKL}  and the moment problem. Let us abuse the notation and write $\bar X^N_0=\bar x$ for every $N$. 
 
First, let us clarify the role of $x_0$. Recall that the process $\{X_t\}_{t\geq0}$  is a martingale. In particular, its expectation remains constant. We claim that for every $i\in [m]$, $\lim_{N\rightarrow \infty}{\bf E}_{\bar x}[X_{\lfloor t\beta_N^2 /c_N\rfloor}^{N,i}]=x_0$. To see this we use duality and convergence to stationarity of a single dual particle. 

\begin{eqnarray}\label{cool}
\lim_{N\rightarrow \infty}{\bf E}_{\bar{x}}[X_{\lfloor t\beta_N^2 /c_N\rfloor}^{N,i}]=\lim_{N\rightarrow\infty}\E_{e_i}\left[ \prod_{j=1}^{m} {x_j}^{B^{N,j}_{\lfloor t { \beta_N^2/c_N}\rfloor}}\right] = \sum_{i=1}^m x_i\nu(i)=x_0.
\end{eqnarray}
 The first  equality comes from duality. For the second equality, {recall that in both Theorem \ref{cv1KKL} and Theorem \ref{cv2KKL} we suppose that $\beta_N^2/c_N\rightarrow \infty$ and thus that the process is in stationarity in the limit.} The third equality follows from the fact that there is only one positive entry of the unitary vector $\bar{B}^N_{\lfloor t  \beta_N^2/c_N\rfloor}$  and that the position of the entry with the one is $\nu$-distributed in the limit.
 
 Now let us study the limiting behavior of one coordinate.
 Let $n\geq1$.
\begin{eqnarray*}
\lim_{N\rightarrow\infty}{\bf E}_{\bar{x}}[(X_{\lfloor t\beta_N^2/c_N\rfloor}^{N,1})^{{n}}]&=&\lim_{N\rightarrow\infty}
{\bf E}_{\bar x}[H({n}.e_1,\bar X_{\lfloor t\beta_N^2/c_N\rfloor}^{N})]\\
&=&\lim_{N\rightarrow\infty}
{\E}_{ n.e_1}[H(\bar{B}^{N}_{\lfloor t\beta_N^2/c_N\rfloor},\bar {x})]\\
&=&{\E}_{ n.e_1}[H({V}^{t,K},\bar {x})]\\
&=&\E_{ n}[x_0^{N_t^K}]\\
&=&{\bf E}_{x_0}[X_{t}^{n}].
\end{eqnarray*}
The third equality follows from \eqref{multinomialdist} and in the fourth equality we used the same argument as for \eqref{cool}. 
This proves that all the moments of $X_{ \lfloor t\beta_N^2 /c_N\rfloor}^{N,1}$ converge to the moments of the Wright-Fisher diffusion.

  Finally, we check that in the limit all the coordinates of $\bar X_{\lfloor t \beta_N^2 /c_N\rfloor}^N$ must take the same value. To do this we will calculate the square of the difference of two arbitrary coordinates. Let $i,j\in [m]$. With the same arguments as before,
\begin{eqnarray*}
\lim_{N\rightarrow\infty}{\bf E}_{\bar x}[(X_{\lfloor t\beta_N^2/c_N\rfloor}^{N,i}-X_{\lfloor  t\beta_N^2/c_N\rfloor}^{N,j})^2]&=&\lim_{N\rightarrow\infty}\left[{\bf E}_{\bar x}[(X_{\lfloor t\beta_N^2/c_N\rfloor}^{N,i})^2]+{\bf E}_{\bar x}[(X_{\lfloor t\beta_N^2/c_N\rfloor}^{N,j})^2]\right.\\
&&\left.-2{\bf E}_{\bar x}[X_{\lfloor t\beta_N^2/c_N\rfloor}^{N,i}X_{\lfloor t\beta_N^2/c_N\rfloor}^{N,j}]\right]\\
&=&\lim_{N\rightarrow\infty}\left[{\bf E}_{\bar x}[H(2e_i,\bar X_{\lfloor t\beta_N^2/c_N\rfloor}^{N})]+{\bf E}_{\bar x}[H(2e_j,\bar X_{\lfloor t\beta_N^2/c_N\rfloor}^{N})]\right.\\
&&\left.-2{\bf E}_{\bar x}[H(e_i+e_j,\bar X_{\lfloor t\beta_N^2/c_N\rfloor}^{N})]\right]\\
&=&\lim_{N\rightarrow\infty}\left[\E_{2e_i}[H(\bar B_{\lfloor t\beta_N^2/c_N\rfloor}^{N},\bar x)]+\E_{2e_j}[H(\bar B_{\lfloor t\beta_N^2/c_N\rfloor}^{N},\bar x)]\right.\\
&&\left.-2\E_{e_i+e_j}[H(\bar B_{\lfloor t\beta_N^2/c_N\rfloor}^{N},\bar x)]\right]\\
&=&\E_{2e_i}[H(V^{t,K},\bar x)]+\E_{2e_j}[H(V^{t,K},\bar x)]\\
&&-2\E_{e_i+e_j}[H({V}^{t,K},\bar x)]\\
&=&0.
\end{eqnarray*}
This ends the proof.

\end{proof}

\section{Extended seed bank di-graph with mutations}\label{S4}

It shall be interesting to study a variant of the model where mutations are added.
There is a classical duality relation between the Wright-Fisher diffusion with mutations
\begin{equation}
dX_t=(u_1(1-X_t)-u_2X_t){dt}+\sqrt{X_t(1-X_t)}dB_t.
\end{equation}
and the block counting process of a Kingman coalescent with freezing, where every lineage can disappear at rate $u_1+u_2$. This relation was established in  \cite{Etheridge_2009} (see also \cite{Griffiths} and the seminal work of \cite{EG93}) and was generalized in \cite{EGT} to $\Lambda$-coalescents with freezing.

These works motivate that we modify the seed bank di-graph to include mutations (Definition \ref{ExtSeedgraph}). In order to observe genealogies with freezing in the model we consider that mutations come from a separate source and not from a reproduction event. This will let us establish a duality relation even in the finite population case (Proposition \ref{seedduality2}) between a modification of the window process and a modification of the forward frequency process, and hence,  a duality relation in the limit generalizing known results (Theorem \ref{Forwardmutation}). 
The limit genealogical processes obtained in this relation are described in Theorems \ref{convergeKM} and \ref{cv2mutation}.  They are general coalescents but with a freezing component. We give a formal description of them that is more convenient for our setting than the common definition in Definition \ref{defbcf}.
Most of the proofs of this section are very similar to those of Sections \ref{S2} and \ref{ffp}. We emphasize the modifications and the intuitions in the proof of Theorem \ref{convergeKM} and enunciate the other results without the proofs, {leaving} them to the reader.
For sake of simplicity, most of the notations of this section will be identical to those of the previous ones when the objects describe the same concepts, although their definitions are slightly modified.

\begin{definition}[The extended seed bank random di-graph]\label{ExtSeedgraph}
Set $N, \mu^N$ and $\mathcal {W}^N$ as in Definition \ref{SeedBankgraph}, the vertex set $V^N$, and also the random variables $\{\bar W^N_g\}_{g\in\mathbb Z}$, where $\bar W^N_g=\{W_{v}^{N}\}_{v\in V^N_g}$, $\{J^N_v\}_{v\in V^N}$ and $\{U^N_v\}_{v\in V^N}$.
Fix two more parameters $u_{N,1},u_{N,2}\geq 0$ such that $u_{N,0}:=1-u_{N,1}-u_{N,2}\geq 0$, and let $\{K^N_v\}_{v\in V^N}$ be a sequence of independent random variables with state space $\{0,1,2\}$ such that $\mathbb{P}(K^N_v=i)=u_{N,i}$. 
Consider the extended vertex set $V^N\cup\{\Delta_1\}\cup\{\Delta_2\}$ and the random function
$F: V^N\mapsto V^N\cup\{\Delta_1\}\cup\{\Delta_2\}$ defined by the rule
\begin{equation}\label{coupling3}
F(v)=\left\{ \begin{array}{ll}
(g(v)-J^N_v,U^N_v) & \textrm{ if } K^N_v=0\\
\Delta_i & \textrm{ if } K^N_v=i\in\{1,2\}.
\end{array}
\right .
\end{equation}
 Let $E^N$ be the set of directed edges $$E^N=\{\{v,F(v)\}, \,\text{ for all }v\in V^N\}.$$
  {The extended seed bank random di-graph} with parameters  $N,$ $\mu^N$, $\mathcal{W}^N$ and $u_{N,1},u_{N,2}$ is given by $ G^N:=( V^N\cup\{\Delta_1\}\cup\{\Delta_2\},E^N)$. 
\end{definition}

In this extended version of the graph, we can define a modification of the original window process.

\begin{definition}[The window process with mutations]\label{winmut}
Fix a generation $g_0$, and $S_{g_0} \subset\cup_{i=1}^{m_N}  V^N_{g_0+1-i}$.
The window process with mutations  is the chain $\{\bar B^N_g,D^N_g\}_{g\geq0}$ where $\{\bar B^N_g\}_{g\geq0}$ is the window process introduced in Definition \ref{WindProc} (but associated to the extended di-graph) and $\{D^N_g\}_{g\geq0}$ is the process counting the cumulate number of lineages connecting with $\Delta_1$ or $\Delta_2$ under the rule $F$, that is $$D^N_g=|\{(v,v')\in  E^N, g(v)\geq g_0-g {, v \text{ is an ancestor of some individual of } S_{g_0}} \text{ and } v' \in\{\Delta_1,\Delta_2\}\}|$$
We denote by $\p_{\bar n}$ the law of the window process starting from $S_{g_0}(\bar n)$ and with $D_0^N=0$.
\end{definition}
\begin{example}
Modify Figure \ref{FigAncestral} such that individual ${(-2,7)}$ is connected to $\Delta_1$. The window process with mutations has the following values: 
${\{\bar{B}^8_0, D^8_0\}}=\lbrace {(4,1)},0\rbrace$, $\{\bar{B}^8_1, D^8_1\}=\lbrace {(2,3)},0\rbrace$, $\{\bar{B}^8_2, D^8_2\}=\lbrace {(5,0)},0\rbrace$, $\{\bar{B}^8_3, D^8_3\}=\lbrace {(1,1)},1\rbrace$,  $\{\bar{B}^8_4, D^8_4\}=\lbrace {(2,0)},1\rbrace$, $\{\bar{B}^8_5, D^8_5\}=\lbrace {(1,0)},1\rbrace$.
\end{example}
The state $\Delta_1$ can be seen as the source of type $a$ mutations and the state $\Delta_2$ as the source of type $A$ mutations. So we can slightly modify Definition \ref{FreqProc} to obtain the new forward frequency process.

\begin{definition}[The frequency process with mutations]\label{forward2}
Fix a generation $g_0$ and an initial sample $S_{g_0}\subset\cup_{i=1}^{m_N}  V^N_{g_0+1-i}$, that we call the type $A$ individuals. Hence, $\cup_{i=1}^{m_N}  V^N_{g_0+1-i}\backslash S_{g_0}$  is the set of type $a$ individuals. 
 For $g\geq0$, set (omitting again the dependence to $S_{g_0}$) 
$$
X^{N,i}_g=\frac{1}{N}|\{v\in V^N_{g_0+g+1-i}: v\text{ is not connected to }\Delta_1\text{ nor to }u\text{ for some }u\in \cup_{i=1}^{m_N}  V^N_{g_0+1-i}\backslash S_{g_0} \}|.
$$
Then, define the process of the neutral frequency of type $A$ individuals $\{\bar X^{N}_g, \theta^N\}_{g\geq0}$, where \\$\theta^N=u_{N,1}/(u_{N,1}+u_{N,2})$ and
$$
\bar X^{N}_g=(X_g^{N,1},\dots,X_g^{N,m_N}).
$$
Set a vector $\bar x=(x_1, \dots, x_{m_N})\in([N]/N)^{m_N}$. In the sequel, we suppose that the forward frequency process starts from a fraction $x_1$ of generation 0, a fraction $x_2$ of generation $-1$, and so on. We denote this sample by $S_{0}(\bar x)=\cup_{i=1}^{m_N}\cup_{k=1}^{ x_iN}\{(1-i,k)\}$ and we denote the law of the frequency process starting from this configuration  by ${\bf P}_{\bar x}$.
\end{definition}

We obtain a duality result, which is the analogue of the moment duality obtained in Proposition \ref{seedduality}, and that can easily be proved adapting the proofs of Section \ref{ffp}.
It is inspired by  \cite{Etheridge_2009} and \cite{Griffiths}.

\begin{proposition}\label{seedduality2}
Fix $N$, $\mu^N$, $\mathcal W^N$ and $u_{N,1},u_{N,2}$. The window process $\{\bar B^N_g,\bar D^N_g\}_{g\geq0}$ and the frequency process with mutations $\{\bar X^N_g,\theta^N\}_{g\geq0}$ are moment duals, in the sense that
for any $g\geq0$,
$$
{{\bf E}_{\bar x}\left[ \prod_{i=1}^{m_N} (X_g^{N,i})^{n_i}\right] =\E_{\bar n}\left[ (\theta^N)^{D_g^N}\prod_{i=1}^{m_N} x_i^{B_g^{N,i}}\right]}.
$$
\end{proposition}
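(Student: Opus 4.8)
The plan is to mimic the proof of Proposition \ref{seedduality}, constructing a sampling duality at the level of the finite graph that, for the appropriate test function, is already a moment duality. Fix $\bar n\in[N]^{m_N}$ and $\bar x\in([N]/N)^{m_N}$, and set $g_0=0$. Realize the ancestral sample as $S_0(\bar n)=\{v=(1-i,U_{j,i}):i=1,\dots,m_N,\ j=1,\dots,n_i\}$, where the $U_{j,i}$ are i.i.d.\ uniform on $[N]$, and the frequency sample as $S_0(\bar x)$ as in Definition \ref{forward2}. The event I want to track is
\[
\mathcal E \ :=\ \{\,S_0(\bar n)\subset S_0(\bar x)\ \text{and no lineage of any element of }S_0(\bar n)\text{ has yet hit }\Delta_1\,\},
\]
but since $D_0^N=0$ and the sample sits at generation $0$, at time $0$ the ``freezing'' part is vacuous and
\[
\widetilde h(\bar n,\bar x):=\p\big(S_0(\bar n)\subset S_0(\bar x)\big)=\prod_{i=1}^{m_N}\prod_{j=1}^{n_i}\p\big((1-i,U_{j,i})\in S_0(\bar x)\big)=\prod_{i=1}^{m_N} x_i^{n_i}=H(\bar n,\bar x),
\]
exactly as in Proposition \ref{seedduality}. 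The new ingredient relative to that proposition is that the initial frequency configuration $S_0(\bar x)$ must be declared in terms of the \emph{extended} di-graph, i.e.\ $v\in S_0(\bar x)$ means $v$ is connected neither to $\Delta_1$ nor to a type-$a$ founder; the factorization above still holds because the events $\{(1-i,U_{j,i})\in S_0(\bar x)\}$ depend only on the independent data attached to distinct vertices.

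Next I would run the two-sided conditioning. Define $\widetilde h^g(\bar n,\bar x)$ as the probability, starting the genealogy of the sample $S_{g+1}(\bar n)$ and the forward process from $S_0(\bar x)$, that every ancestral lineage of $S_{g+1}(\bar n)$ which survives to the present window either traces back into $S_0(\bar x)$ or, having been absorbed at $\Delta_1$ (a type-$a$ mutation) before generation $0$, contributes a factor $\theta^N$ — where the bookkeeping is precisely that a lineage hitting $\Delta_1$ acts like a uniformly chosen type-$a$ founder (it forces type $a$) while a lineage hitting $\Delta_2$ forces type $A$ and is automatically consistent with $S_0(\bar x)$. Concretely, conditioning on the forward process at time $g$ gives
\[
\widetilde h^g(\bar n,\bar x)=\sum_{\bar y,\,\ell}\widetilde h^0(\bar n,\bar y)\cdot(\text{mutation weights})\ \p_{\bar x}\big(\bar X^N_g=\bar y\big)={\bf E}_{\bar x}\Big[\prod_{i=1}^{m_N}(X^{N,i}_g)^{n_i}\Big],
\]
the point being that a type-$a$ mutation event on a lineage contributes the factor $\theta^N=u_{N,1}/(u_{N,1}+u_{N,2})$ only when one further conditions on the lineage \emph{having} been frozen by a mutation (of either type) — this is the standard Etheridge–Griffiths device: an $a$-mutation is ``successful'' with the conditional probability $\theta^N$ given that some mutation occurred. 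Conditioning instead on the backward window-with-mutations process $\{\bar B^N_g,D^N_g\}$ at time $g$, and using that the $D^N_g$ frozen lineages each independently land on $\Delta_1$ with conditional probability $\theta^N$, yields
\[
\widetilde h^g(\bar n,\bar x)=\E_{\bar n}\Big[(\theta^N)^{D^N_g}\prod_{i=1}^{m_N} x_i^{B^{N,i}_g}\Big].
\]
Equating the two expressions gives the claimed identity for all $g\ge0$.

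The heart of the argument, and the step I expect to be the main obstacle, is setting up the coupling of the extended graph so that these two conditional decompositions are literally the same event, with the factor $\theta^N$ appearing correctly. In the neutral case (Proposition \ref{seedduality}) this is immediate because an ancestral lineage either lands in $S_0(\bar x)$ or not; here a lineage can be killed by a mutation before reaching generation $0$, and one must argue that, conditionally on the whole reproduction/jump structure, the \emph{type} assigned to a frozen lineage is an independent Bernoulli($\theta^N$) choice between forcing $a$ (via $\Delta_1$) and forcing $A$ (via $\Delta_2$), with $A$-forcings contributing a factor $1$ to $\widetilde h$ and $a$-forcings contributing a factor equal to the probability a uniform founder is type $a$ — which is already encoded in $\widetilde h^0$. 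Making this precise requires: (i) noting that the variables $\{K^N_v\}$ are independent of $\{J^N_v\},\{U^N_v\},\{\bar W^N_g\}$, so the freezing decoration is independent of the skeleton; (ii) checking that once a lineage freezes it never re-enters the window, so $D^N_g$ is non-decreasing and each increment corresponds to a distinct frozen lineage; and (iii) verifying the exchangeability/uniformity that lets $\prod_j x_i$ emerge as $x_i^{n_i}$ even in the presence of $\Delta_1$-absorbed lineages. Once the coupling is stated carefully, the rest is the same conditioning bookkeeping as in Proposition \ref{seedduality}, which is why I would only \emph{sketch} it and leave the routine verification to the reader, as the authors announce.
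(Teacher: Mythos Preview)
Your approach is exactly what the paper intends: it gives no proof and says only that the result ``can easily be proved adapting the proofs of Section~\ref{ffp}'', and your scheme (define the sampling event on the extended graph, condition forward on $\bar X^N_g$, condition backward on $(\bar B^N_g,D^N_g)$, equate) is the right adaptation of Proposition~\ref{seedduality}.

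The genuine gap is in your bookkeeping for the frozen lineages. You correctly say that a lineage absorbed at $\Delta_2$ forces type $A$ and contributes a factor $1$ to the event ``all sampled individuals are type $A$'', and that a lineage absorbed at $\Delta_1$ forces type $a$. But then the $\Delta_1$-lineage must contribute a factor $0$ to that event, not ``the probability a uniform founder is type $a$'' --- once the lineage is type $a$, the sample event fails deterministically on that coordinate. Consequently, each of the $D^N_g$ frozen lineages independently contributes the factor $\p(\text{frozen lineage hit }\Delta_2\mid\text{frozen})=u_{N,2}/(u_{N,1}+u_{N,2})$, and the backward side your argument actually produces is
\[
\E_{\bar n}\Big[\Big(\tfrac{u_{N,2}}{u_{N,1}+u_{N,2}}\Big)^{D^N_g}\prod_{i=1}^{m_N} x_i^{B^{N,i}_g}\Big].
\]
A one-step sanity check with $m_N=1$, $\bar n=e_1$, $g=1$ confirms this: the forward side equals $u_{N,2}+u_{N,0}\,x$, while the formula with $(\theta^N)^{D^N_g}$ and $\theta^N=u_{N,1}/(u_{N,1}+u_{N,2})$ gives $u_{N,1}+u_{N,0}\,x$. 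So the mismatch with the stated exponent base is a labeling slip in the paper's definition of $\theta^N$ (or equivalently in which of $\Delta_1,\Delta_2$ is declared the $A$-source), not something your argument should manufacture by assigning a nonzero weight to $a$-forced lineages. Fix the factor to $u_{N,2}/(u_{N,1}+u_{N,2})$ and your sketch goes through cleanly.
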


Let us recall the concept of coalescent with freezing.
Usually this model describes an exchangeable coalescent where, apart from participating to merging events, lineages can disappear (or freeze) at a constant rate, independent of each other.
It appears in general population models with mutations in \cite{EGT}.
Its Kingman version is also crucial in the infinite alleles model, where Ewens' sampling formula is established \cite{Ewens_1972}, but also to provide asymptotics to the seed bank (or peripatric) coalescent \cite{GPS}.

As could be anticipated from Definition \ref{winmut}, we will modify a bit this object keeping track of the whole genealogy and  of the frozen lineages separately. We only need to consider its block counting process for our purpose

\begin{definition}[The block counting process of a coalescent with freezing]\label{defbcf}
The block counting process of a $\Xi$-coalescent with freezing parameter $u$ is the process $\{M_t^\Xi, D_t\}_{t\geq0}$ where $M_t^\Xi$ counts the number of (remaining) blocks at time $t$ and $D_t$ counts the cumulate number of frozen blocks until time $t$.
This process jumps from $(n,m)$ to $(n-k,m)$ when a coalescent event occurs (for some $k\in[n-1]$) and to $(n-1,m+1)$ when a  freezing event occurs.
In the special case of a Kingman coalescent with freezing, we use the notation $\{M_t^K, D_t\}_{t\geq0}$.
\end{definition} 

We obtain the following analogue of Theorem \ref{cv1KKL}.
\begin{theorem}[Convergence of the window process I: Kingman limit]\label{convergeKM}
Consider an extended seed bank di-graph with parameters $N, \mu^N, \mathcal W^N$ and $u_{N,1}, u_{N,2}$.
Assume that conditions of Theorem \ref{cv1KKL} hold, plus the following
 $$u_{N,0}\to u_0, \quad\frac{u_{N,1}\beta_N}{c_N}\to u_1, \quad \frac{u_{N,2}\beta_N}{c_N}\to u_2,$$
where $u_0\in(0,1]$ and $u_1,u_2>0$.
 Consider the window process with mutations starting at $\bar B_{0}^N=\bar n$ and $D^N_0=0$ for all $N\in\N$ big enough. Then,
\begin{equation}\label{conv1}
\lim_{N\rightarrow\infty}\{|\bar B_{\lfloor t \beta_N^2/c_Nu_{N,0}^2\rfloor}^N|, D_{\lfloor t \beta_N^2/c_Nu_{N,0}^2\rfloor}^N\}_{t\geq0}= \{M_t^K, D_t\}_{t\geq0}
\end{equation}
in the finite dimension sense,
 where $\{M_t^K, D_t\}_{t\geq0}$ is the block counting process of a Kingman coalescent with freezing parameter $(u_1+u_2)/u_{0}^2$. 
\end{theorem}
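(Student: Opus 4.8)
\emph{Proof strategy.} The plan is to follow the blueprint of the proof of Theorem~\ref{cv1KKL}, now carrying along the extra coordinate $D$ that records frozen lineages. First I would introduce an ``always in stationarity'' companion process $\{\bar Z^N_g,D^{Z,N}_g\}_{g\geq0}$: as in that proof, each lineage $j$ has an independent sequence of $\nu^N$-distributed radii $\underline R^{N,j}_g$, an independent label sequence, and, in addition, at every $g$ with $\underline R^{N,j}_g=1$ an independent Bernoulli$(1-u_{N,0})$ draw; if this draw succeeds the lineage is removed from the pool, $D^{Z,N}$ is incremented, and the lineage is prevented from taking part in the coalescence of Proposition~\ref{ParticleEasy} at that step, so that mutation pre-empts merging. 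One would then prove \eqref{conv1} in the same two steps that gave \eqref{conv}: (a) identify the scaling limit of $\{|\bar Z^N|,D^{Z,N}\}$ by a generator computation, and (b) couple $\{|\bar Z^N|,D^{Z,N}\}$ with the genuine $\{|\bar B^N|,D^N\}$.

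\emph{Generator computation.} For $f$ bounded on $\N\times\N$ and the time change by $\beta_N^2/(c_Nu_{N,0}^2)$, one writes $\mathcal{G}^Nf(n,d)=\frac{\beta_N^2}{c_Nu_{N,0}^2}\E[f(|\bar Z^N_1|,D^{Z,N}_1)-f(n,d)]$. Two mechanisms contribute at leading order. A pairwise merger requires two radii equal to $1$ (probability $\nu^N(1)^2=\beta_N^{-2}$), the same label (probability $\sim c_N$), and \emph{both} mutation coins to fail (probability $u_{N,0}^2$), producing $(n,d)\mapsto(n-1,d)$ at rescaled rate $\to\binom n2$; the factor $u_{N,0}^2$ is exactly cancelled by the $u_{N,0}^{-2}$ in the time change, which is the whole point of that normalisation. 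A single freezing requires one radius equal to $1$ (probability $\beta_N^{-1}$) and the corresponding coin to succeed (probability $1-u_{N,0}$), producing $(n,d)\mapsto(n-1,d+1)$ at rescaled rate $\to n(u_1+u_2)/u_0^2$, using $(1-u_{N,0})\beta_N/c_N=(u_{N,1}+u_{N,2})\beta_N/c_N\to u_1+u_2$. Triple mergers are $O(d_N/(\beta_Nc_N))$, simultaneous double freezings are $O((u_1+u_2)^2c_N/(\beta_N^2u_0^2))$, and combined merge-and-freeze events are of the same negligible order; all vanish. Hence the generator converges to that of the block counting process of a Kingman coalescent with freezing parameter $(u_1+u_2)/u_0^2$ in the sense of Definition~\ref{defbcf}, which yields finite-dimensional convergence of $\{|\bar Z^N_{\lfloor t\beta_N^2/(c_Nu_{N,0}^2)\rfloor}|,D^{Z,N}_{\lfloor t\beta_N^2/(c_Nu_{N,0}^2)\rfloor}\}_{t\geq0}$ to $\{M_t^K,D_t\}_{t\geq0}$.

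\emph{Coupling the two processes.} As in Theorem~\ref{cv1KKL}, at each potential coalescence time (a time at which two surviving lineages share a label) I would use the optimal coupling between the law of the radius vector of the genuine window process, which depends on $\bar n\in[N]^{m_N}$, and the stationary product law $(\nu^N)^{\otimes m_N}$, and I would couple the mutation coins of the two processes by using the same Bernoulli draws at matched reset times. The hypotheses $\mu^N(1)>0$, $N^\varepsilon\tau_Nc_N\to0$ and $(1/4)^{N^\varepsilon}\beta_N^2\to0$, inherited from Theorem~\ref{cv1KKL}, guarantee by the same estimates (in particular \eqref{coumpl}) that with probability tending to $1$ the radius vectors agree from the first potential coalescence time onward; since every merger and every freezing strictly decreases the number of active lineages, at most $|\bar n|$ such events occur before absorption, so the two processes coincide on the whole time interval with overwhelming probability, and \eqref{conv1} follows. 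The freezing component is additionally controlled by a Poisson-type limit theorem: on the scale $\beta_N^2/(c_Nu_{N,0}^2)$ a given lineage is tested for freezing at its essentially $\beta_N$-periodic reset times and freezes with the vanishing probability $1-u_{N,0}$, so its freezing times converge to a rate-$(u_1+u_2)/u_0^2$ Poisson process, asymptotically independent across lineages and of the coalescence mechanism, which depends only on the label and weight variables.

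\emph{Main obstacle.} The delicate point, absent from Theorem~\ref{cv1KKL}, is the interaction between freezing and merging. One must check that, in the genuine di-graph, a lineage that reaches an individual with $K^N_v\neq0$ leaves the sample before it can enter a merger, so that the pairwise merger rate genuinely carries the factor $u_{N,0}^2$ and no combined ``merge-then-freeze'' transition $(n,d)\mapsto(n-2,d+1)$ survives in the limit; this is exactly where the independence built into the construction---radius, label and mutation indicator drawn independently, in the spirit of Proposition~\ref{ParticleEasy}---is essential. The second delicate point is that freezings are tested at the \emph{frequent} reset times rather than only at the rare potential coalescence times, so that the synchronisation of $|\bar B^N|$ and $|\bar Z^N|$ cannot be read off directly from the optimal coupling at coalescence times and must be supplemented by the coin coupling and the Poisson limit above, all while using only the mixing-time hypotheses already assumed. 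Once Theorem~\ref{convergeKM} is established, Theorems~\ref{cv2mutation} and~\ref{Forwardmutation} follow from it exactly as Theorems~\ref{cv2KKL} and~\ref{Forward} follow from Theorem~\ref{cv1KKL}, using the moment duality of Proposition~\ref{seedduality2}.
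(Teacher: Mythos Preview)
Your proposal is correct and follows essentially the same two–step blueprint as the paper: introduce a stationary companion process (with an added freezing coordinate), compute its generator to identify the limit, and then couple it to the genuine window process using the mixing–time hypotheses inherited from Theorem~\ref{cv1KKL}. Your generator computation, including the factor $u_{N,0}^2$ on the merger rate and the freezing rate $n\nu^N(1)(1-u_{N,0})$, matches the paper's.

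The one place where the paper is slightly cleaner than your sketch is the coupling step. You do the optimal coupling of the radius vectors only at potential \emph{coalescence} times and then treat freezing separately via ``same Bernoulli draws at matched reset times'' plus an informal Poisson argument; you correctly flag in your ``Main obstacle'' paragraph that freezings occur at the frequent reset times, which are not yet synchronised before the radii couple. The paper sidesteps this by \emph{enlarging the set of potential jump times} $\rho_i$ to include the freezing times $G^{N,j}$ as well as label-matching times. Then $\rho_1$ is stochastically bounded below by a geometric variable with parameter $n^2c_N+n(u_{N,1}+u_{N,2})$; since $u_{N,1}+u_{N,2}\sim(u_1+u_2)c_N/\beta_N$, the condition $N^\varepsilon\tau_Nc_N\to0$ automatically gives $N^\varepsilon\tau_N(u_{N,1}+u_{N,2})\to0$, so the same \eqref{coumpl} bound shows the radii have mixed before the \emph{first} jump of either kind. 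After that, reset times coincide and your coin coupling is well-defined. This replaces your heuristic Poisson argument by a single line and removes the vagueness you yourself noted.
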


Before proving this result, we need to modify the coupling particle system introduced in section \ref{ParticleEasy}.
Let $ Y_g^N=(R_g^N,L_g^N)$ define a Markov chain with state space $(\mathbb{N}\times [N])\cup\{\Delta_1,\Delta_2\}$ and transition probabilities, conditional on the weights $\bar{W}_g^N$,
 $$
\p\big(Y^N_g=(i,k)|\{\bar{W}^N_{g}\}_g,Y^N_{g-1}=(1,j)\big)={W}_{(g_0-g+1-i,k)}^{N}\mu^N(i){u_{N,0}}
$$
for every $i\geq1$, $k\in [N]$,  
$$
\p\big(Y^N_g=\Delta_i |\{\bar{W}^N_{g}\}_g,Y^N_{g-1}=(1,j)\big)=u_{N,i}
$$
for $i\in\{1,2\}$,
and
$$
\p\big(Y^N_g=(i,k)|\{\bar{W}^N_{g}\}_g,Y^N_{g-1}=(i+1,j)\big)={W}_{(g_0-g,k)}^{N}
$$
for every $i\geq1$ and $k\in [N]$.
We enunciate the coupling result without proof.

\begin{proposition}\label{mutation}
Set $n=\sum n_i$ to be the total size of the initial sample.
For every $g\geq0$, consider $n$ {(conditional on $\{\bar{W}^N_{g}\}_g$)} independent realizations  of $Y^N_g$, that we call $Y^{N,j}_g=(R^{N,j}_g,L^{N,j}_g)$ for $1\leq j\leq n$, and set
 $$\gamma^{N,j}=\inf\{g\geq 1: Y^{N,j}_g\in\{\Delta_1,\Delta_2\} \}.$$  
 Recall $\{\sigma^{N,j}\}_{j=1}^n$ from Proposition \ref{ParticleEasy}.
For all $i\geq1$, set $\sum_{j=1}^n1_{\{R^{N,j}_0=i\}}=B^{N,i}_0$.   
Then, the $i$-th component $B_g^{N,i}$ of the random vector $\bar B_g^N$ is equal in distribution to
$\sum_{j=1}^n1_{\{R^{N,j}_g=i\}}1_{\{\sigma^{N,j}\ge g\}}1_{\{\gamma^{N,j}\ge g\}}$, for all $g\geq0$ and $D_g^{N}$ is equal in distribution to $\sum_{k=1}^g\sum_{j=1}^n1_{\{\gamma^{N,j}=k\}}1_{\{\sigma^{N,j}\ge k\}}$.
\end{proposition}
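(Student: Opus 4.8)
The plan is to mimic the proof of Proposition~\ref{ParticleEasy}, feeding the mutation rule of Definition~\ref{ExtSeedgraph} into the coupled particle system. As before, attach to each of the $n$ individuals of the initial sample $S_{g_0}(\bar n)$ one particle $Y^{N,j}_g=(R^{N,j}_g,L^{N,j}_g)$, now allowed to be absorbed at $\Delta_1$ or $\Delta_2$; the interpretation is that $g_0-R^{N,j}_g-g+1$ is the generation of the most recent still-active ancestor of individual $j$ at time $g$, while $L^{N,j}_g$ is a candidate label that matters only when $R^{N,j}_g=1$. First I would check that the transition probabilities stated before Proposition~\ref{mutation} faithfully reproduce the di-graph dynamics: when $R^{N,j}_{g-1}=1$ the lineage reaches the node $v$ at that generation and consults $F(v)$, so with probability $u_{N,0}$ (the event $K^N_v=0$) it reproduces and jumps $J^N_v$ generations back to $(g(v)-J^N_v,U^N_v)$---which is exactly the event $\{Y^N_g=(i,k)\}$ of probability $W^N_{(g_0-g+1-i,k)}\mu^N(i)u_{N,0}$---while with probability $u_{N,i}$ (the event $K^N_v=i$) the lineage connects to $\Delta_i$. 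The deterministic steps from $(i+1,j)$ to $(i,k)$, with the inconsequential resampling of the label, are unchanged from Proposition~\ref{ParticleEasy} and encode the window sliding one generation back while the ancestor remains dormant. The required joint law comes from the independence of $\{K^N_v\}$, $\{J^N_v\}$, $\{U^N_v\}$ and $\{\bar W^N_g\}$, exactly as in Section~\ref{S2}.

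Next I would set up the coupling with $\{\bar B^N_g,D^N_g\}_{g\geq0}$. Keep $\sigma^{N,j}$ defined by the same formula as in Proposition~\ref{ParticleEasy}, now in terms of the extended particles: it is the first time the lineage of individual $j$, while in a state $(1,k)$, is resolved against the lineage of a lower-index still-active individual, i.e.\ the generation of a coalescence event in the window process. Let $\gamma^{N,j}$ be the first time that same lineage hits $\{\Delta_1,\Delta_2\}$, that is, the generation at which a mutation is placed on it. Whenever $R^{N,j}_g=1$ we take $L^{N,j}_g$ to be the label of the current ancestor, so that $\{R^{N,j}_g=i,\ \sigma^{N,j}\geq g,\ \gamma^{N,j}\geq g\}$ is exactly the event that at time $g$ the lineage of $j$ has neither merged into a lower lineage nor frozen, while its closest ancestor sits $i$ generations inside the window. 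Summing over $j$ then gives $B^{N,i}_g=\sum_{j=1}^n\ind_{\{R^{N,j}_g=i\}}\ind_{\{\sigma^{N,j}\geq g\}}\ind_{\{\gamma^{N,j}\geq g\}}$ almost surely under the coupling.

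For the freezing counter, recall from Definition~\ref{winmut} that $D^N_g$ counts the edges of $E^N$ running from an ancestor of $S_{g_0}$ at a generation at least $g_0-g$ to $\Delta_1$ or $\Delta_2$. Such an edge appears exactly once, at the generation $\gamma^{N,j}=k$ at which the lineage carrying it freezes, and it must be counted only when that lineage is still a distinct block at that moment, i.e.\ $\sigma^{N,j}\geq k$---otherwise the corresponding mutation is already borne by the lower-index lineage into which $j$ has merged, and would be double counted. Accumulating over $k\leq g$ and over $j$ gives $D^N_g\ed\sum_{k=1}^g\sum_{j=1}^n\ind_{\{\gamma^{N,j}=k\}}\ind_{\{\sigma^{N,j}\geq k\}}$, which is the claim; since the whole vector process and the counter agree almost surely under the coupling, equality in distribution of $\{\bar B^N_g,D^N_g\}_{g\geq0}$ follows.

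The one genuinely delicate point I expect is this last bookkeeping of simultaneous events: checking that the precedence conventions built into $\sigma^{N,j}$ (a lower-index lineage wins a coalescence, and a lineage that has already merged contributes neither to $\bar B^N$ nor, at a later would-be mutation, to $D^N$) match the graph-theoretic reading of Definition~\ref{winmut}. Here it helps that a freezing and a coalescence of the same lineage cannot occur at the same generation, since $Y^{N,j}_k=\Delta_i$ precludes $Y^{N,j}_k=(1,k)$, which removes the only ambiguous case; the remaining verifications are routine and, as announced, are left to the reader.
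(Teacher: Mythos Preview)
Your proposal is correct and is precisely the argument the paper has in mind: the paper states Proposition~\ref{mutation} explicitly \emph{without proof} (``We enunciate the coupling result without proof''), referring the reader back to the proof of Proposition~\ref{ParticleEasy} and the general remark that the proofs of Section~\ref{S4} are minor modifications of those in Section~\ref{S2}. Your write-up is exactly that modification---feeding the rule $F$ of Definition~\ref{ExtSeedgraph} into the particle system and tracking the additional absorption times $\gamma^{N,j}$---so there is nothing to compare.
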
 
We are now ready to prove Theorem \ref{convergeKM}.
\begin{proof}[Proof of Theorem \ref{convergeKM}]
As in the proof of Theorem \ref{cv1KKL}, recall the invariant measure $\nu^N$ defined in \eqref{nuN}, define $\underline{Y}^{N,j}_g=(\underline{R}^{N,j}_g,L^{N,j}_g)$ where $\{\underline{R}^{N,j}_g\}_{g\geq0}$ is a sequence of independent $\nu^N$-distributed random variables, and let $\underline{\sigma}^{N,1}=\infty$ and for $2\le j\le n$,
 $$\underline{\sigma}^{N,j}=\inf\{g\geq1: \underline{Y}^{N,j}_g=\underline{Y}^{N,j'}_g=(1,k),\text{ for some }j'<j \text{ such that } \underline{\sigma}^{N,j'}>g, k\in[N]\}.$$
To couple the variables $(\gamma^{N,j})_{j=1}^n$, observe that each of them  can be associated to a geometric r.v. of parameter $u_{N,1}+u_{N,2}$. More precisely, let $(G^{N,1},\dots, G^{N,n})$ be a family of independent geometric r.v.s of parameter $u_{N,1}+u_{N,2}$. Then we define

 $$\underline{\gamma}^{N,j}=1+\inf\{g\geq1:\sum_{k=0}^g1_{\{\underline{R}^{N,j}_k=1\}}=G^{N,j}\}.$$
Hence, we define an artificial window process with mutations 
$
\{\bar Z^N_g,{I}_g^N\}_{g\geq0}
$
where $\bar Z^N_g=( Z^{N,1}_g, \dots,  Z^{N,m_N}_g)$ has coordinates given by
$$
Z^{N,i}_g=\sum_{j=1}^n1_{\{\underline{R}^{N,j}_g=i\}}1_{\{\underline{\sigma}^{N,j}\ge g\}}1_{\{\underline{\gamma}^{N,j}\ge g\}}
$$
and
$$
I^{N}_g=\sum_{k=1}^g\sum_{j=1}^n1_{\{\underline\gamma^{N,j}=k\}}1_{\{\underline\sigma^{N,j}\ge k\}}.
$$
The process $\{|\bar Z^{N}_g|, I_g^N\}_{g\geq0}$ is Markovian. 

 First, we calculate the generator of $\{(|\bar Z^{N}_g|, I^N_g)\}_{g\geq0}$ in order to discover its scaling limit. Let $f:\N\times\N\rightarrow \R^2$ be a bounded function. 
Then
\begin{align*}
\mathcal G^Nf(n,m)&=\E[f(|\bar Z^{N}_1|, I^N_1)-f(n,m)]\\
&=\p((|\bar Z^{N}_1|, I_1^N)=(n-1,m))[f(n-1,m)-f(n,m)]\\
&\hspace{0.4cm}+\p((|\bar Z^{N}_1|, I_1^N)=(n-1,m+1))[f(n-1,m+1)-f(n,m)]\\
&\hspace{0.4cm}+O(\p(|\bar Z^{N}_1|=n-2))\\
&=\binom{n}{2}c_N{(\nu^N(1)u_{N,0})^2}[f(n-1,m)-f(n,m)]\\
&\hspace{0.4cm}+n \nu^N(1)\left({u_{N,1}}+ {u_{N,2}}\right)[f(n-1,m+1)-f(n,m)]\\
&\hspace{0.4cm}+O\left( {d_N}{(\nu^N(1)u_{N,0})^3}+\left(\nu^N(1)\left({u_{N,1}}+ {u_{N,2}}\right)\right)^2+c_N{(\nu^N(1))^3u_{N,0}^2}\left({u_{N,1}}+ {u_{N,2}}\right)\right)\\
&=\frac{c_Nu_{N,0}^2}{\beta_N^2}\left[\binom{n}{2}[f(n-1,m)-f(n,m)]\right. \\
&\hspace{0.4cm}+\left. n \left(\frac{u_{N,1}\beta_N}{c_Nu_{N,0}^2}+ \frac{u_{N,2}
\beta_N}{c_Nu_{N,0}^2}\right)[f(n-1,m+1)-f(n,m)]\right.\\
&\left.\hspace{0.4cm}+O\left( \frac{d_N}{\beta_Nc_N}{u_{N,0}}+\frac{c_N}{\beta_N^2}\left(\frac{u_{N,1}\beta_N}{c_Nu_{N,0}}+ \frac{u_{N,2}\beta_N}{c_Nu_{N,0}}\right)^2+\frac{c_N}{\beta_N^2}\left(\frac{u_{N,1}\beta_N}{c_N}+ \frac{u_{N,2}\beta_N}{c_N}\right)\right)\right].
\end{align*}
 So we conclude that
\begin{equation}
\{|\bar Z^{N}_{\lfloor t\beta_N^2 /c_Nu_{N,0}^2\rfloor}|, I^N_{\lfloor t\beta_N^2 /c_Nu_{N,0}^2\rfloor}\}_{t\geq0}\Rightarrow \{M_{t}^K, D_t\}_{t\geq0},
\end{equation}
the block counting process of a Kingman coalescent with freezing parameter $(u_1+u_2)/u_{0}^2$. 

To see that $\{|\bar B^{N}_{\lfloor t\beta_N^2 /c_Nu_{N,o}^2\rfloor}|, D^N_{\lfloor t\beta_N^2 /c_Nu_{N,0}^2\rfloor}\}_{t\geq0}\Rightarrow \{M^K_{t}, D_t\}_{t\geq0}$,  we couple $\{|\bar Z^{N}_{\lfloor t\beta_N^2 /c_N \rfloor}|\}_{t\geq0}$\\ and $\{|\bar B^{N}_{\lfloor t\beta_N^2/c_N\rfloor}|\}_{t\geq0}$ mimicking the proof of Theorem \ref{cv1KKL}  to show that the same limit is true for the rescaled window process (since $u_{N,0}\to u_0$ with no scaling, we can suppose that $u_{N,0}=1$). 
{In this case, we still denote, for any $p,q\in[n]$, $\rho^{N,p,q}_k=\inf\{g>\rho^{N,p,q}_{k-1}:L^{N,p}_g=L^{N,q}_g\}$ (with $\rho_0^{N,p,q}=0$), but now the potential jump times are $\rho_i=\inf\{g>\rho_{i-1}:g=\rho^{N,p,q}_k$ for some $p,q\in[n]$ and some $k\in\N$ { or} $g=G^{N,j}$ for some $j\in[n] \}$ (with $\rho_0{=0}$).
}
Recall that $(G^{N,1},\dots, G^{N,n})$ is a family of independent geometric r.v.s of parameter $u_{N,1}+u_{N,2}$.
The probability that the coupling is successful is

\begin{eqnarray*}
{p}_N&:=&\inf_{\bar n \in [N]^{m_N}} \p_{\bar n}((\underline{R}^{N,1}_{\hat{\rho}_1},\dots,\underline{R}^{N,m_N}_{\hat{\rho}_1})=(R^{N,1}_{\hat{\rho}_1},\dots,R^{N,m_N}_{\hat{\rho}_1}))\\
&=&1-\sup_{\bar n \in [N]^{m_N}}\p_{\bar n}((\underline{R}^{N,1}_{\hat{\rho}_1},\dots,\underline{R}^{N,m_N}_{\hat{\rho}_1})\neq(R^{N,1}_{\hat{\rho}_1},\dots,R^{N,m_N}_{\hat{\rho}_1}))\\
&=&1-\sup_{\bar n \in [N]^{m_N}}\| \p_{\bar n}((R^{N,1}_{\hat{\rho}_1},\dots,R^{N,m_N}_{\hat{\rho}_1})=\cdot)-(\nu^N)^{\otimes m_N}(\cdot)\|_{TV}
\end{eqnarray*}
where $\p_{\bar n}$ stands for the law of $\{R^{N,1}_{g},\dots,R^{N,m_N}_{g}\}_{g\geq0}$ (or $\{\underline R^{N,1}_{g},\dots,\underline R^{N,m_N}_{g}\}_{g\geq0}$)  starting at the state $\bar n\in [N]^{m_N}$ and where Proposition 4.7 in \cite{Mixing} is used for the last equality.
To prove that ${p}_N\to1$ when $N\to\infty$, take $\varepsilon>0$ such that $N^\varepsilon\tau_N c_N\rightarrow 0$ (and thus $\tau_NN^{\varepsilon}(u_{N,1}+u_{N,2})\rightarrow0$) . 
The condition $\mu^N(1)>0$ implies that, for any $i\geq1$,  the processes $\{R_g^{N,i}\}_{g\geq0}$ are irreducible. So, by Theorem 4.9 in \cite{Mixing}, we have

\begin{equation*}\label{coumpl1}
||\p_{\bar n}((R^{N,1}_{N^\varepsilon\tau_N },\dots,R^{N,m_N}_{N^\varepsilon\tau_N })=\cdot)-(\nu^N)^{\otimes m_N}(\cdot)||_{TV}<(1/4)^{N^\varepsilon}.
\end{equation*}
Then observe that, stochastically, ${\rho}_1\geq {\Gamma}^N$ where ${\Gamma}^N$ is a geometric random variable of parameter\\ ${n^2c_N+n(u_{N,1}+u_{N,2}) }$ and thus $\p({\rho}_1\leq N^\varepsilon\tau_N )\leq \p({\Gamma}^N\leq N^\varepsilon\tau_N )\rightarrow 0$.

Let $T^N_1=\inf\{i\geq1: |\bar Z^{N}_{\rho_i}|=1\}$. 
The jump times are reduced compared to those of the proof of Theorem \ref{cv1KKL}, so the inequality \eqref{trucmuche} still holds and we obtain \eqref{conv1} with similar arguments
\end{proof}

A multiple merger version of this result is also obtained. Note that in this case the hypothesis that $\beta_N\to\beta<\infty$ involves that $u_{N,0}\to1$.

\begin{theorem}[Convergence of the window process II: $\Xi$ limit]\label{cv2mutation}
Fix $\{\mu^N\}_{N\in\mathbb{N}}$ such that  $\beta_N=\E[J^N_v]\to\beta<\infty$ and fix the distribution $\mathcal{W}^N$. Assume that the ancestral process of a Cannings model driven by $\mathcal{W}^N$, that we denote by $\{C^N_g\}_{g\geq0}$ is such that
$$
\lim_{N\rightarrow\infty}\{C^N_{\lfloor t/c_N \rfloor}\}_{t\geq0}= \{N_t^\Xi\}_{t\geq0}
$$
where $\{N_t^{\Xi}\}_{t\geq0}$ stands for the block counting process of a $\Xi$-coalescent.
Assume that
 $$\frac{u_{N,1}}{\beta_Nc_N}\to u_1, \quad \frac{u_{N,2}}{\beta_Nc_N}\to u_2,$$
where $u_1,u_2>0$.
 Consider the window process with mutations starting at $\bar B_{0}^N=\bar n$ and $D^N_0=0$ for all $N\in\N$ big enough. Then,\begin{equation}\label{convXimut}
\lim_{N\rightarrow\infty}\{|\bar B_{\lfloor t/c_N\rfloor}^N|,D^N_{\lfloor t/c_N\rfloor}\}_{t\geq0}= \{M_{t}^{\Xi^\beta},D_t\}_{t\geq0}.
\end{equation}
in the finite dimension sense,
 where $\{M_t^{\Xi^\beta}, D_t\}_{t\geq0}$ is the block counting process of a ${\Xi^\beta}$-coalescent with freezing parameter $u_1+u_2$. 

\end{theorem}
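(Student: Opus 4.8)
The plan is to mimic the proof of Theorem \ref{convergeKM} with the $\Xi$-type modifications already developed in the proof of Theorem \ref{cv2KKL}, so most of the work is bookkeeping of generators and a coupling argument. First I would record the reductions: since $\beta_N\to\beta<\infty$ and \eqref{cdcasoXi} forces $c_N\to0$, one gets $c_N/\beta_N^2\to0$, $(1/4)^{N^\varepsilon}\beta_N^2\to0$ and, because the mixing time of $\{R_g^N\}$ is of order $1$ in this regime, $N^\varepsilon\tau_Nc_N\to0$ (these are exactly the hypotheses needed to run the Theorem \ref{cv1KKL} coupling). Note also that $u_{N,i}/(\beta_Nc_N)\to u_i$ together with $c_N\to0$ and $\beta_N\to\beta$ forces $u_{N,1},u_{N,2}\to0$ and hence $u_{N,0}\to1$, so no extra $u_{N,0}^2$ time-rescaling is needed here (unlike in Theorem \ref{convergeKM}); the correct time scale is simply $\lfloor t/c_N\rfloor$.

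Next I would set up the artificial ``always-in-stationarity'' window process with mutations $\{\bar Z^N_g, I^N_g\}_{g\geq0}$ exactly as in the proof of Theorem \ref{convergeKM}: independent $\nu^N$-distributed $\underline R^{N,j}_g$, independent geometric clocks $G^{N,j}$ of parameter $u_{N,1}+u_{N,2}$ governing freezing at the visits of $\underline R^{N,j}$ to state $1$, and the coalescence times $\underline\sigma^{N,j}$ defined through the labels $L^{N,j}_g$. Then I would compute the generator of $\{(|\bar Z^N_g|, I^N_g)\}_{g\geq0}$ on the time scale $\lfloor t/c_N\rfloor$. Writing $I_i$ for the indicator that $L_1^{N,i}=L_1^{N,j}$ for some $j<i$ (as in the proof of Theorem \ref{cv2KKL}) and using that a given particle in stationarity is at state $1$ after one step with probability $\nu^N(1)\to\beta^{-1}$, the coalescent part of the generator is
$$
c_N^{-1}\,\E\Big[f\big(n-\textstyle\sum_{i=1}^n I_i\mathbf 1_{\{\underline R_1^{N,i}=1\}},m\big)-f(n,m)\Big],
$$
which converges to the $\Xi^\beta$ block-counting generator by the hypothesis \eqref{cdcasoXi} plus the independent thinning by the Bernoulli$(\nu^N(1))$ variables $\mathbf 1_{\{\underline R_1^{N,i}=1\}}$ (parameter $\to\beta^{-1}$), exactly as in Theorem \ref{cv2KKL}. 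The freezing part contributes, per particle, a rate $c_N^{-1}\nu^N(1)(u_{N,1}+u_{N,2})\to \beta^{-1}\beta(u_1+u_2)=u_1+u_2$ for a transition $(n,m)\to(n-1,m+1)$; the competing-events cross terms and the $d_N$ term are $O(c_N)\cdot(\text{bounded})$ and vanish. Hence $\{|\bar Z^N_{\lfloor t/c_N\rfloor}|, I^N_{\lfloor t/c_N\rfloor}\}\Rightarrow\{M_t^{\Xi^\beta},D_t\}$ with freezing parameter $u_1+u_2$.

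Finally I would transfer this to the true window process with mutations $\{|\bar B^N_g|,D^N_g\}$ via the optimal-coupling argument used in both previous proofs: couple $(\underline R^{N,\cdot}_{\rho_i})$ and $(R^{N,\cdot}_{\rho_i})$ at the potential jump times $\rho_i$ (now including the geometric freezing times $G^{N,j}$, as in Theorem \ref{convergeKM}), bound the first potential jump time below by a geometric variable of parameter $n^2c_N+n(u_{N,1}+u_{N,2})$, and use $N^\varepsilon\tau_Nc_N\to0$ (which also gives $N^\varepsilon\tau_N(u_{N,1}+u_{N,2})\to0$ since $u_{N,i}=O(\beta_Nc_N)$) together with \eqref{coumpl} to conclude that the coupling success probability $p_N\to1$; then the trajectories of the two rescaled processes agree with overwhelming probability up to the (stochastically dominated) total number of jumps, giving \eqref{convXimut}. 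The ``easy part'' is the generator computation; I expect the main obstacle to be checking that the freezing clocks do not interfere with the label-coupling used for coalescences and that the extra potential jump times they introduce still leave $p_N\to1$ — i.e., verifying that $N^\varepsilon\tau_N(u_{N,1}+u_{N,2})\to0$ under the stated hypotheses — and, as in Theorem \ref{convergeKM}, confirming that the lower bound \eqref{trucmuche} on $T_1^N$ is unaffected since adding jump times only shrinks the inter-coalescence intervals.
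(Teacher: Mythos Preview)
Your proposal is correct and is precisely the argument the paper has in mind: the paper does not give a separate proof of Theorem~\ref{cv2mutation} but explicitly leaves it to the reader, indicating that it follows by combining the $\Xi$-modifications of Theorem~\ref{cv2KKL} with the mutation machinery of Theorem~\ref{convergeKM}. Your bookkeeping (the reductions from $\beta_N\to\beta<\infty$, the observation that $u_{N,0}\to1$ so no extra $u_{N,0}^2$ rescaling is needed, the thinned-generator computation giving the $\Xi^\beta$ coalescent part and the freezing rate $(u_{N,1}+u_{N,2})/(\beta_Nc_N)\to u_1+u_2$, and the coupling with the enlarged set of potential jump times) matches that template exactly.
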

Finally we obtain a convergence result for the frequency processes.
Because of the two coordinate notation of the block counting process of a coalescent with freezing, its moment dual is now written as $\{X_t,\theta_t\}_{t\geq0}$
\begin{theorem}[Convergence of the forward frequency process]\label{Forwardmutation}
Assume that $m_N\leq m<\infty$ for all $N\in\N$. Fix $\{\mathcal{W}^N\}_{N\geq1}$ , $\{\mu^N\}_{N\geq1}$ (and the associated stationary distribution $\nu^N$) and $u_{N,1},u_{N,2}$ such that either the assumptions of Theorem \ref{convergeKM} hold or the assumptions of Theorem \ref{cv2mutation} hold. { Suppose that $\nu^N$ converges to a measure $\nu$ on $[m]$ as $N\rightarrow\infty$}.
Let $\{\bar X^N,\theta^N\}_{N\geq1}$ be the sequence of frequency processes with parameters $N$, $\mathcal{W}^N$, $\mu^N$ and $u_{N,1},u_{N,2}$ and starting condition $\bar X^N_{0}=(\lfloor Nx_1\rfloor/N,\dots,\lfloor Nx_{m}\rfloor/N)$ for some $\bar x\in[0,1]^{m}$. 
i) Under that assumptions of Theorem \ref{convergeKM},
$$
\lim_{N\rightarrow\infty}\{\bar X_{\lfloor t\beta_N^2 /c_N\rfloor}^N\}_{t\geq0}= \{\bar X_{t}\}_{t\geq0}
$$
in the finite dimension sense,
where $\bar X_t$ is a vector with $m$ identical coordinates $X_t$ such that 
 $X_0=x_0=\sum_{i=1}^m \nu(i)x_i$ a.s.,  and $\{X_t,u_1/(u_1+u_2)\}_{t\geq0}$ is the moment dual of $\{M_{t}^{K}, D_t\}_{t\geq0}$.\\
ii) Under the assumptions of Theorem \ref{cv2mutation}, 
$$
\lim_{N\rightarrow\infty}\{\bar X_{\lfloor t /c_N\rfloor}^N\}_{t\geq0}= \{\bar X_{t}\}_{t\geq0}
$$
in the finite dimension sense,
where $\bar X_t$ is a vector with $m$ identical coordinates $X_t$ such that 
 $X_0=x_0=\sum_{i=1}^m \nu(i)x_i$ a.s.,  and 
$\{X_t,u_1/(u_1+u_2)\}_{t\geq0}$ is moment dual of $\{M_{t}^{\Xi^\beta}, D_t\}_{t\geq0}$.
\end{theorem}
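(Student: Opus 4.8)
The plan is to mimic the proof of Theorem~\ref{Forward}, feeding it with the mutation-augmented moment duality of Proposition~\ref{seedduality2} in place of Proposition~\ref{seedduality}, and with the convergence of the window process with mutations (Theorem~\ref{convergeKM} for case~\textit{i)}, Theorem~\ref{cv2mutation} for case~\textit{ii)}) in place of Theorems~\ref{cv1KKL}--\ref{cv2KKL}. I would only detail case~\textit{i)}; case~\textit{ii)} is word-for-word identical after replacing the time scale $\lfloor t\beta_N^2/c_N\rfloor$ by $\lfloor t/c_N\rfloor$ and the Kingman-with-freezing limit by the $\Xi^\beta$-with-freezing one. One bookkeeping point deserves mention at the outset: Theorem~\ref{convergeKM} is stated on the time scale $\lfloor t\beta_N^2/(c_Nu_{N,0}^2)\rfloor$ while here we work on $\lfloor t\beta_N^2/c_N\rfloor$; since $u_{N,0}\to u_0$ this amounts to a deterministic time change, which I would absorb (as the remark in the proof of Theorem~\ref{convergeKM} permits) so that $\{(|\bar B^N_{\lfloor t\beta_N^2/c_N\rfloor}|,D^N_{\lfloor t\beta_N^2/c_N\rfloor})\}_{t\ge0}$ converges in the finite dimensional sense to $\{(M^K_t,D_t)\}_{t\ge0}$.

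The first real step is to promote this from the pair $(|\bar B^N|,D^N)$ to the full vector $(\bar B^N,D^N)$. Keeping the notation of the proof of Theorem~\ref{convergeKM}, the artificial process $(\bar Z^N_g,I^N_g)$ has the property that, conditionally on $(|\bar Z^N_g|,I^N_g)$, the vector $\bar Z^N_g$ is multinomial with parameters $|\bar Z^N_g|$ and $\nu^N$ and is conditionally independent of $I^N_g$ (freezing removes lineages uniformly among the survivors, whose within-window positions are i.i.d.\ $\nu^N$). Since $m_N\le m<\infty$ and $\nu^N\to\nu$, the very optimal coupling of $\bar B^N$ with $\bar Z^N$ already used there gives, for each fixed $t>0$, $(\bar B^N_{\lfloor t\beta_N^2/c_N\rfloor},D^N_{\lfloor t\beta_N^2/c_N\rfloor})\Rightarrow(V^{t,K},D_t)$, where conditionally on $M^K_t$ the vector $V^{t,K}$ is multinomial with parameters $M^K_t$ and $\nu$. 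Because $\theta^N=u_{N,1}/(u_{N,1}+u_{N,2})\to u_1/(u_1+u_2)=:\theta$ and $\sum_{i=1}^m\nu(i)x_i=x_0$, this yields $\E[(\theta^N)^{D^N_{\lfloor t\beta_N^2/c_N\rfloor}}\prod_{i=1}^m x_i^{B^{N,i}_{\lfloor t\beta_N^2/c_N\rfloor}}\mid M^K_t,D_t]\to\theta^{D_t}x_0^{M^K_t}$.

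Next I would run the moment computation of Theorem~\ref{Forward} verbatim. For a single coordinate, Proposition~\ref{seedduality2} with initial datum $ne_i$ gives ${\bf E}_{\bar x}[(X^{N,i}_{\lfloor t\beta_N^2/c_N\rfloor})^n]=\E_{ne_i}[(\theta^N)^{D^N}\prod_j x_j^{B^{N,j}}]$, which by the previous step tends to $\E_n[\theta^{D_t}x_0^{M^K_t}]={\bf E}_{x_0}[X_t^n]$, the last equality being the limiting form of the duality, i.e.\ the classical moment duality between the Wright--Fisher diffusion with mutation and the Kingman coalescent with freezing \cite{Etheridge_2009,Griffiths}; since the time scale diverges while $i\le m$ is fixed, the warm-up of the window from $ne_i$ is asymptotically irrelevant, so this holds for every coordinate $i$. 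As $X_t$ takes values in $[0,1]$ and is therefore moment-determinate, this proves $X^{N,i}_{\lfloor t\beta_N^2/c_N\rfloor}\Rightarrow X_t$; letting $t\downarrow0$ identifies $X_0=x_0$ a.s. To see that all coordinates collapse to this common value, I would compute $\lim_N{\bf E}_{\bar x}[(X^{N,i}_{\lfloor t\beta_N^2/c_N\rfloor}-X^{N,j}_{\lfloor t\beta_N^2/c_N\rfloor})^2]$ by expanding the square into the three dual expectations started from $2e_i$, $2e_j$ and $e_i+e_j$; each has total initial mass $2$, so by the first step each tends to $\E_2[\theta^{D_t}x_0^{M^K_t}]$ and the limit of the square is $0$. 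Finally, since $\{\bar X^N_g\}_{g\ge0}$ is itself a Markov chain in a bounded state space, the one-time-marginal statements above upgrade to convergence of all finite dimensional time marginals by the usual conditioning iteration, which also identifies the limit as the vector with $m$ equal coordinates given by the moment dual of $\{M^K_t,D_t\}_{t\ge0}$.

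I expect the main obstacle to be precisely the joint refinement of the first step: Theorems~\ref{convergeKM} and~\ref{cv2mutation} only deliver $(|\bar B^N|,D^N)$, so the whole duality argument rests on recovering the full spatial vector $\bar B^N$ together with $D^N$ and the weight $(\theta^N)^{D^N}$, and this in turn relies on the conditional-multinomial structure of the artificial process and on the asymptotic decoupling of the frozen count from the within-window allocation of the surviving lineages. Everything else --- the moment identity, the collapse of the coordinates, the passage to finite dimensional distributions --- is a routine transcription of the proofs of Theorems~\ref{Forward} and~\ref{cv1KKL}, the only genuinely new ingredient being the extra factor $(\theta^N)^{D^N}\to\theta^{D_t}$ carried unchanged through the duality.
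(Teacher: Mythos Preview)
Your proposal is correct and follows precisely the approach the paper intends: the paper gives no explicit proof of Theorem~\ref{Forwardmutation}, stating only that the results of Section~\ref{S4} other than Theorem~\ref{convergeKM} are ``enunciated without the proofs, leaving them to the reader'' and that they parallel Sections~\ref{S2}--\ref{ffp}. Your transcription of the proof of Theorem~\ref{Forward} with Proposition~\ref{seedduality2} in place of Proposition~\ref{seedduality}, together with your correct identification of the one genuinely new technical point (promoting the convergence of $(|\bar B^N|,D^N)$ in Theorems~\ref{convergeKM}/\ref{cv2mutation} to the full vector $(\bar B^N,D^N)$ via the conditional-multinomial structure, in analogy with \eqref{multinomialdist}), is exactly what the paper expects; the only caveat is that your final appeal to the Markov property of $\{\bar X^N_g\}_{g\ge0}$ to pass to finite-dimensional time marginals goes beyond what the paper's template (the proof of Theorem~\ref{Forward}) actually carries out, so if you retain it you should justify it via the exchangeability of the weights within each generation.
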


{\bf Acknowledgement.}
This project was partially supported by UNAM-PAPIIT grant IN104722 and IN101722. LP would like to thank the Universidad del Mar for the support through project 2IIMA2301.

\bibliography{References}{}
\bibliographystyle{babplain}
\end{document}